\def\ifdraft{\ifdim\overfullrule>\z@
        \expandafter\@firstoftwo
    \else
        \expandafter\@secondoftwo
    \fi}
\def\clap#1{\hbox to 0pt{\hss#1\hss}}
\newcommand\xrightarrowtail[2][]{\ensurestackMath{\mathrel{%
  \stackengine{1pt}{%
    \stackengine{0pt}{\rightarrowtail}{\scriptstyle#2}{O}{c}{F}{F}{S}%
  }{\scriptstyle#1}{U}{c}{F}{F}{S}%
}}}
\newcommand\xtwoheadrightarrow[2][]{
    \ensurestackMath{\mathrel{%
      \stackengine{1pt}{%
        \stackengine{0pt}{\twoheadrightarrow}{\scriptstyle#2}{O}{c}{F}{F}{S}%
      }{\scriptstyle#1}{U}{c}{F}{F}{S}%
    }}
}
\let\qqrep\tabbedCenterstack
\newcommand\qrepbox[2]{{\parbox[c][#1em][c]{#1em}{\centering\qqrep{#2}}}}
\newcommand\qrep[1]{\qrepbox{3}{#1}}
\newcommand\qrepp[1]{\hspace{7pt}\mathclap{#1}\hspace{7pt}}
\newcommand*{\comp}{\hspace{-0.1em}\mathrel{\raisebox{.05em}{$\mathsmaller{\circ}$}}\hspace{-0.1em}}
\tikzstyle{polyCol} = [fill, cap=round, join=round, line width=30pt]
\definecolor{myred}{rgb}{1,0.2,0.3}
\tikzset{ PO/.style = {dashed,very near start, commutative diagrams/phantom, "\lrcorner", /tikz/commutative diagrams/.cd,dr}} 
\tikzset{ labl/.style={anchor=north, rotate=90, inner sep=1mm} }
\tikzset{
  redCirc/.style={
    label={[inner sep=0,minimum size=10mm, circle, fill=myred, opacity=0.2]center:{}}
    }
}
\newtheorem{theorem}{Theorem}[section]
\newtheorem{proposition}[theorem]{Proposition}
\newtheorem{lemma}[theorem]{Lemma}
\newtheorem{corollary}[theorem]{Corollary}
\theoremstyle{definition}
\newtheorem*{notation}{Notation}
\newtheorem{definition}[theorem]{Definition}
\newtheorem{example}[theorem]{Example}
\newtheorem{remark}[theorem]{Remark}
\newtheorem{setup}[theorem]{Setup}
\let\mod\relax
\DeclareMathOperator{\add}{add}
\DeclareMathOperator{\Ext}{Ext}
\DeclareMathOperator{\Hom}{Hom}
\DeclareMathOperator{\id}{id}
\DeclareMathOperator{\im}{im}
\DeclareMathOperator{\Iso}{Iso}
\DeclareMathOperator{\mod}{mod}
\DeclareMathOperator{\Mor}{Mor}
\DeclareMathOperator{\op}{{op}}
\DeclareMathOperator\Ab    {\mathsf{Ab}}     
\DeclareMathAlphabet{\pazocal}{OMS}{zplm}{m}{n}
\newcommand{\cA}{\mathcal{A}}
\newcommand{\cS}{\mathcal{S}}
\newcommand{\cB}{\mathcal{B}}
\newcommand{\cC}{\mathcal{C}}
\newcommand{\cE}{\mathcal{E}}
\newcommand{\cF}{\mathcal{F}}
\newcommand{\cT}{\mathcal{T}}
\newcommand{\pA}{\pazocal{A}}
\newcommand{\pC}{\pazocal{C}}
\newcommand{\pH}{\pazocal{H}}
\newcommand{\fs}{\mathfrak{s}}
\newcommand{\NN}{\mathbb{N}}
\newcommand{\EE}{\mathbb{E}}
\renewcommand{\SS}{\mathbb{S}}
\newcommand{\xto}{\xrightarrow}
\newcommand{\inv}{{^{-1}}}
\DeclarePairedDelimiterX\set[1]{\lbrace}{\rbrace}{ #1 }
\DeclarePairedDelimiterX\Set[1]{\lbrace}{\rbrace}{ #1 }
\newcommand{\xrightarrowInline}[2][0pt]{\xrightarrow{\raisebox{#1}{$\scriptstyle\smash{#2}$}}}
\newcommand\xmapsfrom[1]{\mathrel{\reflectbox{$\xmapsto{#1}$}}}
\newlength\tindent
\renewcommand{\indent}{\hspace*{\tindent}}
\newcommand{\dFourTorsScope}[1]{
    \begin{scope}[shift={#1}]
        \dFourTors
}
\newcommand{\dFourTors}[9]{
    \draw [gray, fill=white, fill=#1] (0,3) circle (14pt);
    \draw [gray, fill=white, fill=#2] (1,2) circle (14pt);
    \draw [gray, fill=white, fill=#3] (2,1) circle (14pt);
    \draw [gray, fill=white, fill=#4] (2,3) circle (14pt);
    \draw [gray, fill=white, fill=#5] (3,2) circle (14pt);
    \draw [gray, fill=white, fill=#6] (3,4) circle (14pt);
    \draw [gray, fill=white, fill=#7] (4,3) circle (14pt);
    \draw [gray, fill=white, fill=#8] (5,2) circle (14pt);
    \draw [gray, fill=white, fill=#9] (5,4) circle (14pt);
    \end{scope}
}
\subjclass[2020]{18E10, 18G80}
\keywords{Intermediate categories, Snake lemma, Proper abelian subcategories, Simple minded systems, Negative cluster categories}
\title{Intermediate categories for proper abelian subcategories} 
\author{Anders S. Kortegaard}
\address{Department of Mathematics, Aarhus University, Ny Munkegade 118, 8000 Aarhus C, Denmark}
\email{kortegaard@math.au.dk}
\begin{document}
\maketitle
\begin{abstract}
    Let $\cA$ be an extension closed proper abelian subcategory of a triangulated category $\cT$, with no negative 1 and 2 extensions.
    From this, two functors from $\Sigma\cA\ast\cA$ to $\cA$ can be constructed giving a snake lemma 
    mirroring that of homology without needing a t-structure.
    
    \indent We generalize the concept of intermediate categories, which originates from a paper by Enomoto and Saito, to the setting of proper abelian subcategories and show that under certain assumptions this collection is in bijection with torsion-free classes in $\cA$.
\end{abstract}

\section{Introduction}
\renewcommand{\thetheorem}{\Alph{theorem}}
\setcounter{theorem}{0}

In \cite{enomoto2022grothendieck}, Enomoto and Saito introduce and study Grothendieck monoids of extriangulated categories as a generalization of the Grothendieck group.
They study what happens to the Grothendieck monoid when an extriangulated category is localized, and furthermore, they ask when this becomes the localization of the Grothendieck monoid of that original extriangulated category. 
That is, given an extriangulated category $\pC$ and a class of morphisms $S\in \Mor(\pC)$, when does the Grothendieck monoid $M(\pC_S)$ of the localization $\pC_S$ become a localization of the Grothendieck monoid $M(\pC)$ of $\pC$?
As an example of when this happens, they study \emph{intermediate categories} of a derived category $D^b(\pA)$.
A category $\pC\subseteq D^b(\pA)$ is called intermediate in $D^b(\pA)$ if it is closed under extensions and direct summands, and $\pA\subseteq \pC\subseteq \Sigma\pA\ast\pA$.

\indent In this paper we generalize the notion of intermediate categories to the setting of an arbitrary triangulated category $\cT$
containing a proper abelian subcategory $\cA$.
Proper abelian subcategories were introduced by Jørgensen in \cite{jorgensen2022abelian} as full additive subcategories $\cA$ of a triangulated category $\cT$, with conflations exactly those coming from $\cT$ as short triangles, giving $\cA$ the structure of an abelian category.
In this setting an $\cA$-\emph{intermediate category} $\pC\subseteq \cT$ is closed under extensions and direct summands, and satisfies $\cA\subseteq \pC\subseteq \Sigma\cA\ast\cA$.
Notice that this is a generalization of the former notion since $\pA$ sits inside $D^b(\pA)$ as a proper abelian subcategory.
With this new definition we can consider, as an example, hearts of t-structures which  also sit inside a triangulated category as proper abelian subcategories.
When working with a t-structure in a triangulated category $\cT$ one will always have a homology functor from $\cT$ to the heart $\pH$ of that t-structure.
This homology functor can to some extent describe objects in $\cT$ in terms of $\pH$, using long exact sequences.

\indent It is not always the case that proper abelian subcategories are hearts of t-structures;
for an example of this look no further than \Cref{ex:neg_cluster_cat}.
Therefore we loose the feature of having a homology functor.
We can make a substitute of this, using a version of the snake lemma.

\begin{lemma}[{= \Cref{prop:exact_seq}}]
    Let $\cA$ be a proper abelian subcategory in a triangulated category $\cT$.
    Assume $\cT$ is Krull-Schmidt and that $\cT(\cA,\Sigma^{-i}\cA) = 0$ for $i=1,2$.
    Let $c\in \Sigma \cA\ast\cA$, then there exists a unique minimal right $\Sigma \cA$-approximation $\Sigma a_1\rightarrow c$, and
    a unique minimal left $\cA$-approximation $c\to a_0$. 
    Defining $G,F: \Sigma \cA\ast \cA\rightarrow \cA$ by $F(c) = a_1$ and $G(c) = a_0$ induces two functors with the following property:
    Let
    \begin{equation}
        \begin{tikzcd} c\rar["f"]& c'\rar["g"]& c''\rar[]& \Sigma c \end{tikzcd}
    \end{equation}
    be a triangle in $\cT$ with $c,c',c''\in\Sigma \cA \ast\cA$,
    then there exists a morphism $\delta:F(c'')\rightarrow G(c)$, such that

    \begin{equation*}
    \begin{tikzcd}
        0      \rar&
        F(c)   \rar["F(f)"]&
        F(c')  \rar["F(g)"]&
        F(c'') \rar["\delta"]&
        G(c)   \rar["G(f)"]&
        G(c')  \rar["G(g)"]&
        G(c'')\rar&
        0
    \end{tikzcd}
    \end{equation*}
    
    is an exact sequence in $\cA$.
\end{lemma}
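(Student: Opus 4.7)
The proof breaks into three stages: establishing that $F$ and $G$ are functors, building a $3\times 3$ diagram from the given triangle, and identifying $F(c'')$ as an extension inside this diagram from which the snake sequence follows.

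For stage one, Krull--Schmidt gives the minimal approximations uniquely. Any composite $\Sigma a\to c\to G(c)$ or $\Sigma F(c)\to c\to a$ for $a\in\cA$ lies in $\cT(\cA,\Sigma^{-1}\cA)=0$, so the universal properties of the approximations hold. Given $f\colon c\to c'$, the composite $\Sigma F(c)\to c\to c'\to G(c')$ vanishes for the same reason, so it lifts through $\Sigma F(c')\to c'$ to a map I call $\Sigma F(f)$; uniqueness of the lift follows from $\cT(\cA,\Sigma^{-2}\cA)=0$, yielding functoriality of $F$, and dually of $G$.

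For stage two, I apply the $3\times 3$-lemma to the square with vertical map $f$ and horizontal maps the approximations $\Sigma F(c)\to c$, $\Sigma F(c')\to c'$. This produces a grid of triangles in which the middle column is the given triangle, the outer columns have third terms $\Sigma A$ and $B$ with $A=\cone(F(f))$ and $B=\cone(G(f))$, and the bottom row is $\Sigma A\to c''\to B\to\Sigma^2 A$. A short vanishing argument using the approximation triangle of $\Sigma^{-1}c''$ shows that $F(f)$ is monic in $\cA$, so $A=\Coker F(f)\in\cA$; the object $B$ need not lie in $\cA$, but octahedral factorization of $G(f)$ through its image places it in $\Sigma\cA\ast\cA$.

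For stage three, since $\Sigma A\in\Sigma\cA$, the map $\Sigma A\to c''$ factors through the approximation $\Sigma F(c'')\to c''$, producing $\alpha\colon A\to F(c'')$. The resulting morphism from the bottom row to the approximation triangle of $c''$ has middle component $\id_{c''}$, so the octahedral axiom yields a triangle $\Sigma\cone(\alpha)\to B\to G(c'')\to\Sigma^2\cone(\alpha)$. Applying $\cT(-,a)$ for $a\in\cA$ to both the bottom row and the defining triangle of $B$ gives $\cT(G(c''),a)\cong\cT(c'',a)\cong\cT(B,a)\cong\cT(\Coker G(f),a)$, so $G(c'')\cong\Coker G(f)$ by Yoneda, and compatibility of these isomorphisms forces $\cone(\alpha)\cong\Ker G(f)$. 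This gives a short exact sequence $0\to\Coker F(f)\to F(c'')\to\Ker G(f)\to 0$ in $\cA$; defining $\delta$ as $F(c'')\twoheadrightarrow\Ker G(f)\hookrightarrow G(c)$, the remaining exactness of the snake sequence is a routine chase in $\cA$, with $G(g)$ epic and exactness at $G(c')$ following from the dual argument. The main obstacle is in this final stage: the lift $\alpha$ is not canonical, and one must use minimality and Krull--Schmidt carefully to ensure that the short exact sequence (and hence $\delta$) is well defined and independent of the choice of lift.
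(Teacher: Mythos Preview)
Your plan is correct and would yield a valid proof, but it is organised rather differently from the paper's argument.  The paper defines $\delta$ at the outset as the explicit composite $\psi_c\comp h\comp\Sigma^{-1}\phi_{c''}$ and then verifies exactness at each of the six positions separately: for exactness at $F(c)$ and $F(c')$ it builds an explicit inverse to the comparison map $F(c)\to\ker F(g)$, and for exactness at $F(c'')$ it checks directly that the canonical map $\im F(g)\hookrightarrow F(c'')$ satisfies the universal property of $\ker\delta$.  Your route instead packages the middle four positions into a single short exact sequence $0\to\Coker F(f)\to F(c'')\to\Ker G(f)\to 0$, extracted from the $3\times 3$ grid via a second octahedron on the factorisation of $G(f)$ and a Yoneda comparison.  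This is more conceptual and avoids the paper's somewhat laborious verification that $\eta\alpha=\id$ and $\alpha\eta=\id$; on the other hand the ``compatibility'' step you flag---matching the map $B\to G(c'')$ from one octahedron with $B\to\Coker G(f)$ from the other---needs the observation that both are determined after precomposition with $G(c')\to B$ (using $E_1$), which is the analogue of the paper's repeated ``factors through $\Sigma^{-2}(\cdots)$, hence vanishes'' arguments.

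One point you should not worry about: the lift $\alpha\colon A\to F(c'')$ is in fact unique, by exactly the same $E_2$ argument that gave functoriality of $F$ (any two lifts differ by a map factoring through $\Sigma^{-2}G(c'')$).  So no appeal to minimality or Krull--Schmidt is needed at that stage, and once you know $\alpha$ is unique you can check directly that it coincides with the canonical inclusion $\im F(g)\hookrightarrow F(c'')$, which makes the final splicing into the six-term sequence automatic.
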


Using this snake lemma as a substitute for homology, we can similarly to Enomoto and Saito classify all the $\cA$-intermediate subcategories of $\cT$ as follows.

\begin{theorem}[{= \Cref{thm:intermediate_torsionfree} \& \Cref{cor:intermediate-bijection}}]
    Let $\cA$ be an extension closed proper abelian subcategory in a triangulated category $\cT$.
    Assume $\cT$ is Krull-Schmidt and that $\cT(\cA,\Sigma^{-i}\cA) = 0$ for $i=1,2$.

    \begin{enumerate}
        \item If $\cF\subseteq\cA$ is a torsion-free class then $\Sigma\cF\ast\cA$ is an $\cA$-intermediate category.
            Furthermore, $F(\Sigma\cF\ast\cA) = \cF$.
        \item Let $\pC$ be an $\cA$-intermediate category which also satisfies that $\pC\subseteq \cA\ast\Sigma \cA$. Then $F(\pC)$ is a torsion-free class.
                Furthermore $\pC=\Sigma F(\pC)\ast\cA$.
    \end{enumerate}
    If $\cA\ast\Sigma \cA = \Sigma \cA \ast \cA$,
    this gives a bijection between the collection of $\cA$-intermediate categories and that of torsion-free classes of $\cA$.
\end{theorem}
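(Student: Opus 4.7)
The theorem has three parts: (1), (2), and the bijection.

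For \textbf{(1)}, let $\cF \subseteq \cA$ be torsion-free. The inclusions $\cA \subseteq \Sigma\cF\ast\cA \subseteq \Sigma\cA\ast\cA$ are immediate. I first prove $F(\Sigma\cF\ast\cA) = \cF$: for $c \in \Sigma\cF\ast\cA$ with triangle $\Sigma f\to c\to a$ and $f\in\cF$, using $\cT(\Sigma\cA,\cA)=0$ (equivalent to the assumed $\cT(\cA,\Sigma^{-1}\cA)=0$) shows $\Sigma f\to c$ is a right $\Sigma\cA$-approximation, so by minimality $\Sigma F(c)$ is a summand of $\Sigma f$ and hence $F(c)\in\cF$ by summand-closure of torsion-free classes; conversely $F(\Sigma f) = f$ for $f\in\cF$. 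Extension-closure of $\Sigma\cF\ast\cA$ then follows from \Cref{prop:exact_seq}: given a triangle $c\to c'\to c''$ with $c,c''\in\Sigma\cF\ast\cA$, a standard argument using the vanishing hypotheses first places $c'\in\Sigma\cA\ast\cA$, after which the snake exact sequence exhibits $F(c')$ as an extension of a subobject of $F(c'')\in\cF$ by $F(c)\in\cF$, which lies in $\cF$ by subobject- and extension-closure. Summand-closure follows from Krull--Schmidt together with functoriality of $F$.

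For \textbf{(2)}, the crucial step is showing that $\Sigma F(c)\in\pC$ for every $c\in\pC$. Here the hypothesis $\pC\subseteq\cA\ast\Sigma\cA$ enters: choose a decomposition $b_0\to c\to\Sigma b_1$ coming from it. The rotated triangle $b_1\to b_0\to c$ lies entirely in $\Sigma\cA\ast\cA$, and applying \Cref{prop:exact_seq} to it yields
\[0\to F(c)\to b_1\to b_0\to G(c)\to 0\]
in $\cA$; in particular the composite $F(c)\to b_1\to b_0$ vanishes, so the octahedron on $F(c)\to b_1\to b_0$ produces the distinguished triangle
\[\im(b_1\to b_0)\longrightarrow \Sigma F(c)\oplus b_0\longrightarrow c\longrightarrow\Sigma\im(b_1\to b_0),\]
exhibiting $\Sigma F(c)\oplus b_0$ as an extension in $\pC$ of $c\in\pC$ by $\im(b_1\to b_0)\in\cA\subseteq\pC$. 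Summand-closure of $\pC$ then gives $\Sigma F(c)\in\pC$, establishing the identification $F(\pC)=\{a\in\cA\mid \Sigma a\in\pC\}$.

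Granting this identification, torsion-freeness of $F(\pC)$ is clean: closure under subobjects follows because an inclusion $f'\hookrightarrow f$ with $\Sigma f\in\pC$ gives a triangle $f/f'\to\Sigma f'\to\Sigma f$ with $f/f'\in\cA\subseteq\pC$, forcing $\Sigma f'\in\pC$ by extension-closure; closure under extensions follows because a SES $0\to f_1\to f\to f_2\to 0$ with $\Sigma f_i\in\pC$ gives the triangle $\Sigma f_1\to\Sigma f\to\Sigma f_2$, forcing $\Sigma f\in\pC$. The identity $\pC=\Sigma F(\pC)\ast\cA$ is then immediate: the defining triangle $\Sigma F(c)\to c\to G(c)$ gives $\pC\subseteq\Sigma F(\pC)\ast\cA$, while for the reverse inclusion any $c$ in a triangle $\Sigma f\to c\to a$ with $f\in F(\pC)$ has $\Sigma f\in\pC$ (by the identification) and $a\in\cA\subseteq\pC$, so $c\in\pC$ by extension-closure.

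The \textbf{bijection} is now automatic: under $\cA\ast\Sigma\cA=\Sigma\cA\ast\cA$ every $\cA$-intermediate category satisfies the extra hypothesis of (2), and the identities $F(\Sigma\cF\ast\cA)=\cF$ and $\pC=\Sigma F(\pC)\ast\cA$ express that $\cF\mapsto\Sigma\cF\ast\cA$ and $\pC\mapsto F(\pC)$ are mutually inverse. The main obstacle is the octahedral construction in (2): it is the unique place where the hypothesis $\pC\subseteq\cA\ast\Sigma\cA$ is essential, providing the triangle $b_1\to b_0\to c$ whose kernel--cokernel analysis via the snake lemma forces $\Sigma F(c)$ into $\pC$ in one application of the octahedral axiom.
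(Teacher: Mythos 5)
Your proof is correct and reaches the same conclusions, but it takes genuinely different routes in two places, both of which are worth noting.

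In part (1), the paper proves extension-closure of $\Sigma\cF\ast\cA$ by the elementary subcategory computation $(\Sigma\cF\ast\cA)\ast(\Sigma\cF\ast\cA)\subseteq\Sigma(\cF\ast\cF)\ast(\cA\ast\cA)\subseteq\Sigma\cF\ast\cA$, using only $\cA\ast\Sigma\cF\subseteq\Sigma\cF\ast\cA$ (\Cref{lem:proper_ab_res}). You instead invoke the snake lemma to show $F(c')\in\cF$ directly and then conclude $c'\in\Sigma\cF\ast\cA$ from the defining triangle of $F(c')$. This works but is heavier machinery than necessary; the direct calculation is preferable here. Your sentence ``Summand-closure follows from Krull--Schmidt together with functoriality of $F$'' is compressed: you also need that $\Sigma\cA\ast\cA$ itself is closed under summands (the paper cites Iyama--Yoshino for this in \Cref{lem:proper_ab_res}(3)); once that is granted, additivity of $F$ and summand-closure of $\cF$ finish it.

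In part (2), your argument for $\Sigma F(c)\in\pC$ is a nice alternative to the paper's. The paper first establishes a separate lemma (\Cref{lem:intermediate_epi}) producing a morphism $b\to c$ from $\cA$ whose composite with $\psi_c$ is epic, then forms the commutative square with $(\alpha\ \phi_c)\colon b\oplus\Sigma F(c)\to c$ and runs an octahedron to get the triangle $M\to b\oplus\Sigma F(c)\to c\to\Sigma M$ with $M\in\cA$. You instead apply the snake lemma directly to the rotated triangle $b_1\to b_0\to c$, obtain $0\to F(c)\to b_1\to b_0\to G(c)\to 0$, observe the null-composite $F(c)\to b_1\to b_0$, and run the octahedral axiom on that composable pair (using that the cone of the zero morphism is the direct sum), yielding $\im(b_1\to b_0)\to\Sigma F(c)\oplus b_0\to c\to\Sigma\im(b_1\to b_0)$. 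This produces the same triangle as the paper (with $M=\im(b_1\to b_0)$) and effectively subsumes \Cref{lem:intermediate_epi}, since the exactness at $b_0$ in your sequence is precisely the epimorphism that lemma asserts. Your derivations of torsion-freeness of $F(\pC)$, of the identity $\pC=\Sigma F(\pC)\ast\cA$, and of the bijection all match the paper's reasoning.
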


In the last section we give an example of applying this theory in a negative cluster category. 
This is a triangulated category in which proper abelian subcategories are not hearts of t-structures.
We give examples of when the assumptions made in the article are satisfied, together with applying the results.

\subsection*{Conventions}
Let $\cA$ be a proper abelian subcategory of a triangulated category $\cT$.
We will denote monomorphisms (resp. epimorphisms) in $\cA$ by $\rightarrowtail$  (resp. $\twoheadrightarrow$).
Conversely, if one of these arrows is used, it will be with respect to a proper abelian subcategory.

\indent All subcategories will by default be assumed to be full.

\section{Preliminaries}
\renewcommand{\thetheorem}{\arabic{section}.\arabic{theorem}}
\setcounter{theorem}{0}

\begin{definition}
    Let $\cT$ be a triangulated category and let $\cA,\cB\subseteq\cT$ be subcategories.
    Then define the subcategory
    \begin{equation*}
        \cA\ast\cB = \set{x \mid \text{there exists a triangle }a\to x\to b \to \Sigma a \text{ in } \cT, \text{ with }a\in\cA, b\in\cB}.
    \end{equation*}

\end{definition}

\begin{definition}
    Let $\cA$ be an additive category.
    A subcategory $\cB\subseteq \cA$ of $\cA$ is called an \emph{additive subcategory of $\cA$} if it is closed under isomorphisms, direct sums and direct summands.
\end{definition}

\subsection{Proper abelian subcategories}
Proper abelian subcategories of a triangulated category were introduced by Jørgensen in \cite{jorgensen2022abelian},
which also serves as a good introduction to the subject.
Here we will state the properties that are needed for this article.

\indent For the rest of this section let $\cT$ be a triangulated category.

\begin{definition}
    Let $\cA\subseteq\cT$ be an additive subcategory. 
    $\cA$ is a \emph{proper abelian subcategory} if it is abelian in such a way that
    $a\xrightarrowtail{\smash{\alpha}} a'\xtwoheadrightarrow{\smash{\alpha\raisebox{-1.3pt}{$\scriptstyle'$}}} a''$ is a short exact sequence in $\cA$ if and only if there is a triangle $a\xrightarrow{\smash{\raisebox{-.5pt}{$\scriptstyle\alpha$}}} a'\xrightarrow{\smash{\raisebox{-.5pt}{$\scriptstyle\alpha\raisebox{-1.3pt}{$\scriptstyle'$}$}}} a''\to \Sigma a$ in $\cT$.
\end{definition}

\begin{definition}[{\cite[def. 0.2]{jorgensen2021proper}}]
    Let $\cA\subseteq\cT$ be a proper abelian subcategory, $n\in\NN$.
    We say that $\cA$ \emph{satisfies $E_n$}
    if $\cT(\cA,\Sigma^{-i}\cA) = 0$  for $0<i\leq n$.
\end{definition}

\begin{definition}
    Let $\cA$ be an abelian category. An additive subcategory $\cF\subseteq \cA$ is called a \emph{torsion-free} class if it is closed under extensions and subobjects.
\end{definition}

\begin{lemma}
    \label{lem:proper_ab_res}
    Let $\cA\subseteq\cT$ be an extension closed proper abelian subcategory, $\cF\subseteq \cA$ a torsion-free class then the following statements hold.
    \begin{enumerate}
        \item\label{item:par1} $\cA\ast\Sigma\cF\subseteq\Sigma\cF\ast\cA$,
        \item\label{item:par2} $\Sigma\cF\ast\cA$ is extension closed,
        \item\label{item:par3} If $\cT$ is Krull-Schmidt and $\cA$ satisfies $E_1$, then $\Sigma\cF\ast\cA$ is an additive subcategory of $\cT$.
    \end{enumerate}
\end{lemma}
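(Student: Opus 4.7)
The plan is to treat the three parts in sequence, using the octahedral axiom for (1), associativity of the star product for (2), and an idempotent-lifting argument (in the Krull--Schmidt setting) for (3).

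For (1), I would start with $x\in\cA\ast\Sigma\cF$ and rotate the defining triangle to obtain $f\xrightarrow{\epsilon}a\to x\to\Sigma f$ with $f\in\cF$ and $a\in\cA$. Since $\cA$ is a proper abelian subcategory, $\epsilon$ factors canonically in $\cA$ as $f\twoheadrightarrow\im\epsilon\rightarrowtail a$; let $k=\Ker\epsilon$ and $c=\Coker\epsilon$. The short exact sequences $k\rightarrowtail f\twoheadrightarrow\im\epsilon$ and $\im\epsilon\rightarrowtail a\twoheadrightarrow c$ lift to triangles in $\cT$, and applying the octahedral axiom to the composition $f\to\im\epsilon\to a$ produces a fourth triangle $\Sigma k\to x\to c\to\Sigma^2 k$. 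Since $\cF$ is closed under subobjects, $k\in\cF$; and $c\in\cA$ as a quotient of $a$; hence $x\in\Sigma\cF\ast\cA$.

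For (2), I would observe that $\Sigma\cF$ is extension-closed in $\cT$ ($\cF$ is extension-closed in $\cA$ as a torsion-free class and $\cA$ is extension-closed in $\cT$ by hypothesis), and so is $\cA$, while (1) gives $\cA\ast\Sigma\cF\subseteq\Sigma\cF\ast\cA$. Then associativity of $\ast$ yields $(\Sigma\cF\ast\cA)\ast(\Sigma\cF\ast\cA) = \Sigma\cF\ast(\cA\ast\Sigma\cF)\ast\cA \subseteq \Sigma\cF\ast\Sigma\cF\ast\cA\ast\cA \subseteq \Sigma\cF\ast\cA$.

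For (3), closure under isomorphisms and finite direct sums is immediate from the definition of $\ast$. Closure under direct summands I plan to deduce from the general principle: if $\cT$ is Krull--Schmidt and $\mathcal{U},\mathcal{V}\subseteq\cT$ are summand-closed subcategories with $\cT(\mathcal{U},\mathcal{V})=0$, then $\mathcal{U}\ast\mathcal{V}$ is summand-closed. The hypothesis $E_1$ supplies $\cT(\Sigma\cA,\cA)=0$, hence $\cT(\Sigma\cF,\cA)=0$, so the principle applies with $\mathcal{U}=\Sigma\cF$, $\mathcal{V}=\cA$. The principle itself is proved by idempotent lifting: given $x=y\oplus z\in\Sigma\cF\ast\cA$ with triangle $\Sigma f\xrightarrow{\alpha}x\xrightarrow{\beta}a$, the idempotent $e\in\End(x)$ projecting onto $y$ satisfies $\beta\circ e\circ\alpha=0$ and therefore lifts to endomorphisms $e_{\Sigma f}\in\End(\Sigma f)$ and $e_a\in\End(a)$; the defect $e_{\Sigma f}^2-e_{\Sigma f}$ factors through $\Sigma^{-1}a$, and semi-perfectness of the endomorphism rings (from Krull--Schmidt) permits lifting to genuine idempotents, which split and decompose the triangle into two summand triangles, one of them witnessing $y\in\Sigma\cF\ast\cA$. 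The main obstacle will be precisely this idempotent-lifting step, whose subtlety is to identify the correct ideal of $\End(\Sigma f)$ (and $\End(a)$) in which the defect lies and to place it inside the Jacobson radical so that true idempotents can indeed be produced.
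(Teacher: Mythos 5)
Your parts (1) and (2) are correct and agree with the paper in substance: for (1) you write out the image-factorization/octahedron argument that the paper outsources to the analogue of \cite[lem.~5.2]{jorgensen2022abelian}, and your computation in (2) is exactly the paper's. The difference lies in (3). The paper disposes of it by citing \cite[prop.~2.1(1)]{iyama2008mutation}: if $\mathcal{U},\mathcal{V}$ are additive subcategories of a Krull--Schmidt triangulated category with $\cT(\mathcal{U},\mathcal{V})=0$, then $\mathcal{U}\ast\mathcal{V}$ is closed under direct summands, applied with $\mathcal{U}=\Sigma\cF$, $\mathcal{V}=\cA$ (the vanishing coming from $E_1$). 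You state precisely this principle, but then undertake to prove it, and your proof is incomplete exactly at its decisive point, as you yourself concede.

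Concretely: from $\beta\comp e\comp\alpha=0$ you obtain $e_{\Sigma f}$ with $\alpha\comp e_{\Sigma f}=e\comp\alpha$, and the defect $e_{\Sigma f}^2-e_{\Sigma f}$ factors through $\Sigma^{-1}a$; but under $E_1$ alone $\cT(\Sigma f,\Sigma^{-1}a)\cong\cT(\Sigma^{2}f,a)$ need not vanish (that is an $E_2$-type condition), and the ideal of endomorphisms of $\Sigma f$ factoring through $\Sigma^{-1}\add(a)$ is not visibly contained in the Jacobson radical of $\End(\Sigma f)$ --- nothing in the hypotheses immediately rules out a common summand of $\Sigma^{2}f$ and $a$. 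Moreover, even if idempotents could be lifted modulo such an ideal, the lifted idempotent no longer satisfies $\alpha\comp\tilde e_{\Sigma f}=e\comp\alpha$, so you lose the endomorphism of the triangle; what is actually required is a triple of idempotents, one on each vertex, commuting with all three maps (together with the fact, itself needing proof, that such a compatible triple splits the triangle into a direct sum of two triangles). None of this is supplied. The argument that does work in the Krull--Schmidt setting runs differently: $E_1$ makes $\beta$ a left $\cA$-approximation of $x$ and $\alpha$ a right $\Sigma\cA$-approximation, these restrict to approximations of each direct summand of $x$, and uniqueness of minimal approximations then splits the triangle accordingly --- which is in effect what the cited result of Iyama--Yoshino provides. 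So as written your (3) has a genuine gap; either complete it along these lines or simply quote \cite[2.1(1)]{iyama2008mutation}, as the paper does.
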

\begin{proof}
    (\ref{item:par1}) Since $\cF$ is closed under subobjects, this follows from a similar argument as \cite[lem. 5.2]{jorgensen2022abelian}.

    \medbreak
    (\ref{item:par2}) Follows directly from the following calculation:
    \begin{equation*}
        (\Sigma \cF \ast \cA)\ast(\Sigma \cF \ast \cA)
        \subseteq (\Sigma \cF \ast \Sigma \cF)\ast( \cA \ast \cA)
        \subseteq \Sigma (\cF \ast \cF)\ast(\cA \ast \cA)
        \subseteq \Sigma \cF\ast \cA.
    \end{equation*}

    \medbreak
    (\ref{item:par3}) Since $\cF$ and $\cA$ are both additive subcategories of $\cT$, it follows directly from \cite[2.1(i)]{iyama2008mutation} that $\Sigma \cF\ast\cA$ is an additive subcategory of $\cT$.
\end{proof}

\subsection{Extriangulated categories}
The concept of extriangulated categories is a simultaneous generalization of exact categories and triangulated categories.
These were first introduced by Nakaoka and Palu in \cite{nakaoka2019extriangulated}. 
For the concrete definition see \cite[def.~2.12]{nakaoka2019extriangulated}.
An \emph{extriangulated category} is a triple $\cC = (\cC, \EE, \fs)$, where $\cC$ is an additive category,
$\EE: \cC^{\op}\times\cC\to\Ab$ is a bifunctor to the category of abelian groups, and $\fs$ is a ``realization'' of $\EE$.

\begin{example}
    \begin{enumerate}
        \item Let $\cE$ be an exact category, then $\cE$ can be described as an extriangulated category $(\cE, \Ext_\cE^1(-,-),\fs)$
            where the realization of $\delta\in\Ext_\cE^1(Z,X)$ is its corresponding conflation,
            i.e. $\fs(\delta) = (X\rightarrowtail Y\twoheadrightarrow Z)$.
        \item Let $\cT$ be a triangulated category, then $\cT$ can be described as an extriangulated category $(\cT, \Hom_\cT(-,\Sigma -),\fs)$
            where the realization of $\delta\in\Hom_\cE(Z,\Sigma X)$ is its corresponding short triangle,
            i.e. $\fs(\delta) = (X\to Y\to Z)$.
        \item Let $\cA$ be an extension closed proper abelian subcategory of a triangulated category $\cT$.
            The canonical extriangulated structure of $\cA$ is the restriction of the extriangulated structure on $\cT$ to $\cA$.
    \end{enumerate}
\end{example}

There are also extriangulated categories that are neither exact nor triangulated.
The following lemma can give an example of such a category.

\begin{lemma}
    Let $\cA$ be an extension closed proper abelian subcategory of a triangulated category $\cT$.
    Then  $\Sigma \cA\ast\cA = (\Sigma \cA\ast\cA, \Hom_\cT(-,\Sigma -), \fs)$
    is an extriangulated category obtained by restriction of the extriangulated structure of $\cT$.
\end{lemma}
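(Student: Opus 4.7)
The plan is to invoke the standard principle that an extension-closed full additive subcategory of an extriangulated category inherits its structure by restriction; this appears as \cite[Remark 2.18]{nakaoka2019extriangulated} (the criterion specialized to triangulated ambient categories is also explicit there).

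First, I would recall from the preceding example that $\cT$ itself carries the extriangulated structure with bifunctor $\EE = \Hom_\cT(-,\Sigma-)$ and realization $\fs$ sending a class $\delta \in \Hom_\cT(z,\Sigma x)$ to its associated short triangle $x \to y \to z$. Second, I would verify that $\Sigma\cA\ast\cA$ is a full additive subcategory of $\cT$: the zero object lies in it (take both parameters to be $0 \in \cA$), and the termwise direct sum of two defining triangles $\Sigma a \to x \to a'$ and $\Sigma b \to y \to b'$ is again a triangle of the required shape, since $\cA$ is additive. Third, \Cref{lem:proper_ab_res}(\ref{item:par2}) provides exactly what is required of $\Sigma\cA\ast\cA$, namely closure under extensions in $\cT$.

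With these ingredients assembled, I would consider the triple
\begin{equation*}
    \bigl(\Sigma\cA\ast\cA,\ \EE|_{(\Sigma\cA\ast\cA)^\op\times(\Sigma\cA\ast\cA)},\ \fs|\bigr)
\end{equation*}
and check that it satisfies the axioms of an extriangulated category. The critical point is that the restricted realization still takes values inside the subcategory: for any $\delta \in \Hom_\cT(z,\Sigma x)$ with $x,z \in \Sigma\cA\ast\cA$, the middle term $y$ of the corresponding triangle $x \to y \to z \to \Sigma x$ again lies in $\Sigma\cA\ast\cA$ by extension-closedness. The remaining axioms (biadditivity and naturality of $\EE$ and $\fs$, together with the compatibility/octahedral axioms) only quantify over conflations in $\cT$ and therefore transport verbatim from $\cT$ to the subcategory.

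The main obstacle is really only bookkeeping: tracking each axiom of \cite[def.~2.12]{nakaoka2019extriangulated} and observing that it is preserved. There is no genuine mathematical difficulty beyond the invocation of \Cref{lem:proper_ab_res}(\ref{item:par2}), so the proof should be very short, essentially a citation of the restriction principle combined with extension-closedness.
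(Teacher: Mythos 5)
Your proposal is correct and follows essentially the same route as the paper: cite \Cref{lem:proper_ab_res}(\ref{item:par2}) for extension-closedness of $\Sigma\cA\ast\cA$, then invoke \cite[rmk.~2.18]{nakaoka2019extriangulated} to restrict the extriangulated structure of $\cT$. The paper's proof is simply a more terse version, leaving the additive-subcategory check and axiom bookkeeping implicit.
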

\begin{proof}
    By \Cref{lem:proper_ab_res}(\ref{item:par2}), $\Sigma \cA\ast\cA$ is extension-closed. 
    Therefore the claim follows directly from \cite[rmk.~2.18]{nakaoka2019extriangulated}.
\end{proof}

\subsection{Monoids}
For an introduction to monoids and localization of monoids we refer to \cite[app.~A]{enomoto2022grothendieck}.
Here we give a very short introduction including the results we need.

\begin{definition}
    A \emph{monoid} is a pair $(M, \cdot)$, where $M$ is a set, and $\cdot:M \times M \to M$ is a binary operation satisfying the following:
    \begin{enumerate}
        \item The operation ``$\cdot$'' is associative,
        \item There is an identity element in $M$ w.r.t. ``$\cdot$''.
    \end{enumerate}
    In other words, a monoid can be viewed as a group without inverses.
\end{definition}

\begin{definition}
    Given a monoid $M$ and a subset $S\subseteq M$, a \emph{localization} of $M$ w.r.t. $S$
    is a pair $(M_S, q)$ with a monoid $M_S$ and a morphism $q: M\to M_S$ satisfying the following universal property:
    For each monoid $N$ and morphism $\phi:M\to N$ where $\phi(s)$ is invertible for all $s\in S$, there exists
    a unique morphism $\psi:M_S\rightarrow N$ such that $\phi = \psi \comp q$.
    That is, there exists a commutative diagram

    \begin{equation*}
    \begin{tikzcd}
        M\dar["q"']\rar["\phi"] & N\\
        M_S\ar[ur, "\!\exists!\,\psi"', dashed]
    \end{tikzcd}
    \end{equation*}
\end{definition}

\begin{lemma}[{\cite[under def. A.11]{enomoto2022grothendieck}}]
    Given a monoid $M$ and a subset $S\subseteq M$, the localization $M_S$ exists.
\end{lemma}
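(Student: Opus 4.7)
The plan is to construct $M_S$ explicitly by generators and relations, then verify the universal property directly.

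First I would form the free product $M * F$ in the category of monoids, where $F$ is the free monoid on a set $\{\bar s \mid s \in S\}$ of formal symbols (one for each $s\in S$), using the fact that $\Mon$ has coproducts. Explicitly, elements of $M*F$ can be taken to be reduced words alternating between non-identity elements of $M$ and non-identity elements of $F$, with concatenation (followed by reduction) as the product. Then I would define $M_S$ as the quotient of $M*F$ by the smallest congruence $\sim$ containing the pairs $(s\cdot\bar s,\,1)$ and $(\bar s\cdot s,\,1)$ for every $s\in S$; this congruence exists as the intersection of all congruences containing those pairs. The map $q\colon M\to M_S$ is the composite of the canonical inclusion $M\hookrightarrow M*F$ with the quotient map.

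For the universal property, given $\phi\colon M\to N$ with $\phi(s)$ invertible for every $s\in S$, I would first use the universal property of the free product to define a monoid homomorphism $\tilde\phi\colon M*F\to N$ by $\tilde\phi|_M = \phi$ and $\tilde\phi(\bar s) = \phi(s)\inv$. Since $\tilde\phi(s\cdot\bar s) = \phi(s)\phi(s)\inv = 1_N = \phi(s)\inv\phi(s) = \tilde\phi(\bar s\cdot s)$, the homomorphism $\tilde\phi$ respects the defining relations of $\sim$, and so descends to a monoid homomorphism $\psi\colon M_S\to N$ with $\psi\comp q = \phi$. Uniqueness follows because $M_S$ is generated as a monoid by $q(M)$ together with the classes $[\bar s]$; any candidate $\psi'$ with $\psi'\comp q=\phi$ must agree with $\psi$ on $q(M)$, while from $\psi'(q(s)\cdot[\bar s]) = \psi'(1) = 1_N$ it is forced that $\psi'([\bar s]) = \phi(s)\inv$.

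The main technical issue is checking that the relation generated by the pairs $(s\bar s,1)$, $(\bar s s,1)$ is genuinely a \emph{congruence} (so that multiplication descends to the quotient) rather than just an equivalence relation; this is standard universal-algebra content, since the class of congruences on a monoid is closed under arbitrary intersection, so the smallest congruence containing a given set of pairs exists. Everything else is formal: the quotient of a monoid by a congruence is a monoid, the inclusion and projection are monoid homomorphisms, and the factorization $\psi$ inherits the homomorphism property from $\tilde\phi$.
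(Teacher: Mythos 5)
Your construction is correct: forming the coproduct $M * F$ with $F$ free on formal symbols $\bar s$, quotienting by the congruence generated by $(s\bar s,1)$ and $(\bar s s,1)$, and checking the universal property (including the uniqueness argument, which works since $M_S$ is generated by the image of $M$ and the classes of the $\bar s$, and $\psi'([\bar s])$ is forced to be $\phi(s)^{-1}$ once $\phi(s)$ is invertible) is exactly the standard generators-and-relations existence proof. The paper itself offers no proof, only a citation to Enomoto--Saito, where the localization is obtained by essentially this construction, so your argument matches the intended one.
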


\subsection{The Grothendieck monoid}

\begin{definition}[{\cite[def. 2.5]{enomoto2022grothendieck}}]
    Let $\pC$ be a skeletally small extriangulated category. 
    Then a \emph{Grothendieck monoid} of $\pC$ is a pair $M(\pC) = (M(\pC), \pi)$,
    with a monoid $M(\pC)$, and a map $\pi:\Iso(\pC)\to M(\pC)$, such that

    \begin{enumerate}
        \item $\pi$ \emph{respects conflations}, i.e. $\pi([0]) = 0$ and for every conflation $X\to Y\to Z\dashrightarrow$ in $\pC$ one has that
            $\pi([X])+\pi([Z]) = \pi([Y])$.
        \item Given a monoid $N$ and map $\mu:\Iso(\pC)\to N$ which respects conflations,
            there exists a unique morphism $\overline{\mu}:M(\pC)\to N$ such that $\overline{\mu}\comp\pi=\mu$.
            
            \begin{equation*}
            \begin{tikzcd}
                \Iso(\pC)\dar["\pi"']\rar["\mu"] &  N\\
                M(\pC).\ar[ur, "\!\exists!\,\overline{\mu}"', dashed]
            \end{tikzcd}
            \end{equation*}
    \end{enumerate}
\end{definition}

\begin{proposition}[{\cite[prop.~2.7]{enomoto2022grothendieck}}]
    Let $\pC$ be an skeletally small extriangulated category, then the Grothendieck monoid exists.
\end{proposition}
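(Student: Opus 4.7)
The plan is to realize $M(\pC)$ as a monoid defined by generators and relations. Since $\pC$ is skeletally small, $\Iso(\pC)$ is a set, so the free commutative monoid $F$ on $\Iso(\pC)$ exists; write $j:\Iso(\pC)\to F$ for the canonical inclusion.

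First, I would define $\sim$ to be the smallest congruence on $F$ (i.e.\ equivalence relation compatible with the monoid operation) containing the pair $(j([0]),0)$ together with the pairs $(j([X])+j([Z]),\,j([Y]))$ for every conflation $X\to Y\to Z\dashrightarrow$ in $\pC$. Such a smallest congruence exists as the intersection of all congruences on $F$ containing these relations. Set $M(\pC):=F/\!\sim$ and let $\pi:\Iso(\pC)\to M(\pC)$ be $j$ followed by the canonical projection. By construction $\pi$ respects conflations.

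Next, I would verify the universal property. Given any monoid $N$ and a map $\mu:\Iso(\pC)\to N$ respecting conflations, the universal property of the free commutative monoid produces a unique monoid homomorphism $\tilde\mu:F\to N$ extending $\mu$. The assumption that $\mu$ respects conflations says precisely that $\tilde\mu$ identifies the generating pairs of $\sim$, so $\tilde\mu$ factors uniquely through the quotient as a homomorphism $\overline{\mu}:M(\pC)\to N$ satisfying $\overline{\mu}\comp\pi=\mu$. Uniqueness of $\overline{\mu}$ on all of $M(\pC)$ follows from the fact that $\pi(\Iso(\pC))$ generates $M(\pC)$ as a monoid.

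The only genuine subtlety is set-theoretic: skeletal smallness is essential so that $\Iso(\pC)$ is a set and the free commutative monoid construction is legitimate. Beyond that, the argument is a routine generators-and-relations presentation and amounts to unpacking the universal property of the quotient monoid; no special feature of the extriangulated structure is used beyond the notion of conflation itself.
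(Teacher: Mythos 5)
Your argument is correct and is essentially the standard construction of the Grothendieck monoid by generators and relations, which is precisely what the cited reference (Enomoto--Saito, Prop.~2.7) does; the paper itself gives no proof here, just the citation. One small point worth spelling out: when you say $\tilde\mu$ ``identifies the generating pairs of $\sim$, so $\tilde\mu$ factors uniquely through the quotient,'' the precise reason is that the kernel congruence $\{(a,b)\in F\times F \mid \tilde\mu(a)=\tilde\mu(b)\}$ is a congruence containing the generating pairs, hence contains the smallest such congruence $\sim$; this is what lets $\tilde\mu$ descend.
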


\subsection{Intermediate categories} 

\begin{definition}
    Let $\cA$ be an extension closed proper abelian subcategory of a triangulated category $\cT$.
    A subcategory $\pC\subseteq \cT$ is called an \emph{$\cA$-intermediate category} if
    \begin{enumerate}
        \item $\cA\subseteq \pC \subseteq \Sigma \cA\ast\cA$,
        \item $\pC$ is extension-closed,
        \item $\pC$ closed under direct summands.
    \end{enumerate}
\end{definition}

\begin{lemma}
    Let $\cA$ be an extension closed proper abelian subcategory of a triangulated category $\cT$,
    then an $\cA$-intermediate category $\pC$ has an extriangulated structure inherited from $\cT$ by restriction.
\end{lemma}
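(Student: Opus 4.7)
The plan is to reduce the statement to the result of Nakaoka and Palu already invoked for $\Sigma\cA\ast\cA$. Concretely, I would note that $\cT$, being triangulated, has a canonical extriangulated structure $(\cT,\Hom_\cT(-,\Sigma -),\fs)$, where $\fs$ sends $\delta\colon Z\to \Sigma X$ to the (isomorphism class of the) distinguished triangle it determines. The definition of an $\cA$-intermediate category $\pC$ says precisely that $\pC$ is a full, extension-closed, direct-summand closed subcategory of $\cT$. Since extension-closedness is exactly the hypothesis required by \cite[rmk.~2.18]{nakaoka2019extriangulated}, applying that remark to $\pC\subseteq\cT$ yields immediately that $\pC$ inherits an extriangulated structure by restriction.

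More explicitly, I would describe the restricted structure as $(\pC,\EE|_\pC,\fs|_\pC)$, where $\EE|_\pC(Z,X)=\Hom_\cT(Z,\Sigma X)$ for $X,Z\in\pC$, and $\fs|_\pC(\delta)$ is the conflation coming from the triangle $X\to Y\to Z\to \Sigma X$ realized by $\delta$ in $\cT$. The extension-closedness of $\pC$ ensures that the middle object $Y$ of any such triangle lies in $\pC$, so that $\fs|_\pC$ really does take values in sequences supported on $\pC$. The remaining axioms of an extriangulated category (functoriality of $\EE|_\pC$, the compatibility between $\EE$ and $\fs$, and the two versions of (ET4)) hold for $\pC$ because they hold for $\cT$ and only involve diagrams whose objects stay inside $\pC$ thanks to extension-closedness.

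There is essentially no obstacle in this argument; the only point that requires a brief justification is that the ``middle object condition'' in the Nakaoka–Palu remark is guaranteed by the extension-closed axiom in the definition of $\cA$-intermediate category, and the closure under direct summands (while part of the definition) is not needed for this lemma but will be used elsewhere. The proof is therefore a direct citation, parallel to the proof of the preceding lemma for $\Sigma\cA\ast\cA$.
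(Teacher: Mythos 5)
Your argument is correct and is essentially the paper's own proof: the paper likewise observes that $\pC$ is extension-closed by definition and then cites \cite[rmk.~2.18]{nakaoka2019extriangulated} to restrict the extriangulated structure of $\cT$ to $\pC$. Your extra remarks (that closure under direct summands is not needed here, and that the middle term of a realized triangle stays in $\pC$) are accurate elaborations of the same citation.
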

\begin{proof}
    By definition $\pC$ will be extension-closed, hence it follows directly from \cite[rmk.~2.18]{nakaoka2019extriangulated}
    that $\pC$ has the claimed extriangulated structure.
\end{proof}

\section{The snake lemma}

The following setup will be assumed throughout the rest of this section.
\begin{setup}
    \label{setup}
    Let $\cT$ be a Krull-Schmidt triangulated category, and $\cA$ an extension closed proper abelian subcategory satisfying $E_2$.
\end{setup}

\begin{lemma} We have the following.
    \label{lem:GF_definition}
    \begin{enumerate}
        \item \label{item:GF_1} Up to isomorphism each $x\in\Sigma \cA\ast\cA$ permits a unique short triangle $\Sigma a_1^x \xrightarrow{\phi_x} x \xrightarrow{\psi_x} a_0^x$, with $\phi_x$ a minimal right $\Sigma \cA$-approximation, and $\psi_x$ a minimal left $\cA$-approximation. Furthermore, $\phi_x, \psi_x$ are natural in $x$.
        \item \label{item:GF_2} The assignments $x\mapsto a_1^x$ and $x\mapsto a_0^x$ have canonical augmentations to functors $F$ and $G$.
    \end{enumerate}
\end{lemma}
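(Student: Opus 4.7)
The plan is to bootstrap everything from one observation: any triangle witnessing $x \in \Sigma\cA \ast \cA$ already consists of two minimal approximations. Existence, uniqueness, and the functoriality of part (\ref{item:GF_2}) all fall out from this witness triangle combined with standard Krull--Schmidt technology.

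I first verify existence. By the definition of $\Sigma\cA\ast\cA$, pick any triangle $\Sigma a \xrightarrow{\phi} x \xrightarrow{\psi} a_0 \to \Sigma^2 a$ with $a, a_0 \in \cA$. Both $\phi$ and $\psi$ are automatically approximations: for any $\alpha \colon \Sigma a' \to x$ with $a' \in \cA$, the composite $\psi \comp \alpha$ lies in $\cT(\Sigma a', a_0) = \cT(a', \Sigma^{-1}a_0) = 0$ by $E_1$, so $\alpha$ factors through $\phi$; dually $\psi$ is a left $\cA$-approximation.

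The core step is automatic minimality. Krull--Schmidt decomposes $\Sigma a = \Sigma a_1 \oplus \Sigma a_2$ so that $\phi|_{\Sigma a_1}$ is a minimal right approximation and $\phi|_{\Sigma a_2}$ factors through $\phi|_{\Sigma a_1}$. A triangular-matrix automorphism of $\Sigma a_1 \oplus \Sigma a_2$ then replaces $\phi$ by $(\phi|_{\Sigma a_1},\, 0)$ without changing the triangle up to isomorphism, so its cone decomposes as $\cone(\phi|_{\Sigma a_1}) \oplus \Sigma^2 a_2 \cong a_0$. Hence $\Sigma^2 a_2$ is a direct summand of $a_0 \in \cA$, forcing $\Sigma^2 a_2 \in \cA$; combined with $a_2 \in \cA$ and $E_2$ this yields
\begin{equation*}
\cT(a_2, a_2) = \cT\bigl(a_2,\, \Sigma^{-2}(\Sigma^2 a_2)\bigr) \subseteq \cT(\cA, \Sigma^{-2}\cA) = 0,
\end{equation*}
so $a_2 = 0$ and $\phi$ was already minimal. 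The dual argument for $\psi$ exhibits a summand $\Sigma^{-1}b_2$ of $\Sigma a \in \Sigma\cA$, so $\Sigma^{-2}b_2 \in \cA$ and the same $E_2$ calculation gives $b_2 = 0$.

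With automatic minimality in hand, uniqueness of the triangle up to isomorphism reduces to the standard Krull--Schmidt uniqueness of minimal approximations, promoted to an isomorphism of triangles by TR3. For part (\ref{item:GF_2}), given $f \colon x \to y$ the composite $f \comp \phi_x$ factors through $\phi_y$ as $\phi_y \comp \tilde f$ for some $\tilde f \colon \Sigma a_1^x \to \Sigma a_1^y$; any discrepancy $\tilde f - \tilde f'$ factors through the fibre $\Sigma^{-1}a_0^y \to \Sigma a_1^y$ of $\phi_y$, which lies in $\cT(\Sigma a_1^x, \Sigma^{-1}a_0^y) = \cT(a_1^x, \Sigma^{-2}a_0^y) = 0$ by $E_2$, so $\tilde f$ is unique. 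Setting $F(f)$ to be the desuspension of $\tilde f$ (and dually defining $G$ from $\psi_x$) gives well-defined functors by uniqueness, and the identity $f \comp \phi_x = \phi_y \comp \Sigma F(f)$ is precisely the naturality of $\phi_x$ in $x$. The main obstacle is the automatic-minimality step: the key insight is that a would-be redundant summand $\Sigma a_2$ must resurface inside $a_0 \in \cA$ as $\Sigma^2 a_2$, whereupon $E_2$ annihilates it.
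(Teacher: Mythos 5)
Your argument is correct, and the functoriality half is essentially the paper's: existence of the lift $\tilde f$ from the right-approximation property, uniqueness because the difference factors through the rotated vertex $\Sigma^{-1}a_0^y$ and $\cT(\cA,\Sigma^{-2}\cA)=0$, with naturality of $\phi_x$ read off from the defining square, and $G$ handled dually. Where you diverge is part (\ref{item:GF_1}): the paper simply notes that $E_1$ makes the maps in any witness triangle approximations and then invokes Krull--Schmidt to pass to the (unique) minimal approximations, leaving implicit why minimality on both sides can be achieved simultaneously within a single triangle. Your ``automatic minimality'' step proves something stronger -- that in \emph{every} triangle $\Sigma a\to x\to a_0$ with $a,a_0\in\cA$ both maps are already minimal -- by splitting off the non-minimal summand $\Sigma a_2$, observing that its shift $\Sigma^2 a_2$ reappears as a direct summand of $a_0\in\cA$ (and dually $\Sigma^{-2}b_2$ as a summand of $a\in\cA$), and killing it with $E_2$ via $\cT(a_2,a_2)\subseteq\cT(\cA,\Sigma^{-2}\cA)=0$. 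This buys you a cleaner uniqueness statement (any two witness triangles are isomorphic via TR3, since their left legs are isomorphic minimal approximations) and makes explicit a compatibility point the paper delegates to general Krull--Schmidt facts; the cost is only the routine verification that the cone of $(\phi|_{\Sigma a_1},0)$ decomposes as $\cone(\phi|_{\Sigma a_1})\oplus\Sigma^2 a_2$, which you state correctly.
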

\begin{proof}
    Given $x\in\Sigma\cA\ast\cA$ there is a short triangle $\Sigma a_1^x \xrightarrow{\phi_x} x \xrightarrow{\psi_x} a_0^x$, with $a_0^x, a_1^x\in\cA$.
        Since $\cA$ satisfies $E_1$ we get that $\phi_x$ is a right $\Sigma \cA$-approximation, and
        $\psi_x$ is a left $\cA$-approximation.
        Since $\cT$ is Krull-Schmidt there are, up to isomorphism, unique $a_0^x,a_1^x,\psi_x,\phi_x$ such that $\phi_x,\psi_x$ are minimal.
        Before showing that $\phi_x$ and $\psi_x$ are natural in $x$, we will show that the assignments $F(x) = a_1^x$ and $G(x) = a_0^x$ induce functors.
    We will show that $F$ is a functor. Showing that $G$ is a functor follows by a similar argument.
    
    \indent It is enough to check that given $x,y\in\Sigma\cA\ast\cA$, and $f\in\Hom(x,y)$,  there
    exists a unique morphism $\alpha:\Sigma F(x)\rightarrow \Sigma F(y)$ such that the following diagram commutes,

    \begin{equation}
    \label{eq:def_Ff}
    \begin{tikzcd}
        \Sigma F(x) \rar["\alpha"] \dar["\phi_x"]& 
        \Sigma F(y) \dar["\phi_y"]\\
        x \rar["f"]& 
        y,
    \end{tikzcd}
    \end{equation}

    where $\phi_x,\phi_y$ are the minimal right $\Sigma\cA$-approximations from above.
    Existence of $\alpha$ follows directly from $\phi_y$ being a right $\Sigma \cA$-approximation.
    To prove uniqueness, assume there is another morphism $\beta:\Sigma F(x)\rightarrow \Sigma F(y)$ satisfying that $\phi_y \comp \beta = f\comp\phi_x$.
    Then $\phi_y \comp (\alpha - \beta) = 0$,
    thus $\alpha-\beta$ factors though $\Sigma\inv G(y)$:

    \begin{equation*}
    \begin{tikzcd}
        & \Sigma F(x)\rar["\phi_x"]\dar["\alpha - \beta"]\ar[dl, dashed] & x\dar["0"]\\
        \Sigma\inv G(y)\rar& \Sigma F(y)\rar["\phi_y"] & y.
    \end{tikzcd}
    \end{equation*}
    
    The assumption that $\cA$ satisfies $E_2$ implies that $\alpha - \beta = 0$, thus $\alpha = \beta$.
    This shows that $F$ is a functor, and that $\phi_x$ is natural in $x$ follows directly from this (see \cref{eq:def_Ff}).
\end{proof}

\begin{notation}
For the rest of the paper we let $F,G$ denote the functors from \Cref{lem:GF_definition}, and given $x\in\Sigma\cA\ast\cA$ we let $\phi_x,\psi_x$ denote the morphisms from \Cref{lem:GF_definition}.
\end{notation}

\begin{lemma}
    \label{prop:exact_seq}
    Let 
    \begin{equation}
        \begin{tikzcd} c\rar["f"]& c'\rar["g"]& c''\rar& \Sigma c \end{tikzcd}
    \end{equation}
    be a triangle with $c,c',c''\in\Sigma \cA \ast\cA$
    then there exists a morphism $\delta:F(c'')\rightarrow G(c)$, such that

    \begin{equation*}
    \begin{tikzcd}
        0      \rar&
        F(c)   \rar["F(f)"]&
        F(c')  \rar["F(g)"]&
        F(c'') \rar["\delta"]&
        G(c)   \rar["G(f)"]&
        G(c')  \rar["G(g)"]&
        G(c'')\rar&
        0
    \end{tikzcd}
    \end{equation*}
    
    is an exact sequence in $\cA$.
\end{lemma}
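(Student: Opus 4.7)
The plan is to combine the triangulated $3\times 3$-lemma (Verdier's nine-lemma) with direct $\Hom$-chases powered by $E_2$ and a Krull-Schmidt cancellation argument. The strategy is: first show $F(f)$ is monic and $G(g)$ is epic; then apply the nine-lemma together with the octahedral axiom to produce an auxiliary decomposition $\Sigma Z \to c'' \to C_G$ where $Z$ is an extension of $\Ker G(f)$ by $\Coker F(f)$; and finally use Krull-Schmidt together with $E_2$ to match this against the canonical decomposition $\Sigma F(c'') \to c'' \to G(c'')$.

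To prove $F(f)$ is a monomorphism, let $h\colon a \to F(c)$ satisfy $F(f) \circ h = 0$. Applying $\Sigma$ and naturality of $\phi$ gives $f\circ\phi_c\circ\Sigma h = 0$, so $\phi_c\circ\Sigma h\colon \Sigma a \to c$ factors through the boundary $\Sigma^{-1}c'' \to c$. The approximation triangle of $c''$ combined with the two vanishings $\cT(\Sigma a, F(c''))=0$ (from $E_1$) and $\cT(\Sigma a, \Sigma^{-1}G(c''))=0$ (from $E_2$) yields $\cT(\Sigma a, \Sigma^{-1}c'')=0$, so $\phi_c\circ\Sigma h = 0$. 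A second iteration using the approximation triangle of $c$ and $\cT(\Sigma a, \Sigma^{-1}G(c))=0$ forces $\Sigma h = 0$, hence $h=0$. The dual argument shows $G(g)$ is an epimorphism.

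Now apply the triangulated $3\times 3$-lemma to the commutative square
\begin{equation*}
\begin{tikzcd}
\Sigma F(c) \rar["\Sigma F(f)"] \dar["\phi_c"'] & \Sigma F(c') \dar["\phi_{c'}"] \\
c \rar["f"] & c'
\end{tikzcd}
\end{equation*}
(commutative by naturality of $\phi$). The middle row is forced to be the given triangle $c \to c' \to c''$, so the third column produces a triangle $X_1 \to c'' \to X_3$. Since $F(f)$ is monic with cokernel $C_F$, the cone yields $X_1 \cong \Sigma C_F$; and applying the octahedral axiom to the image factorization $G(c) \twoheadrightarrow \Im G(f) \rightarrowtail G(c')$ places $X_3$ in a triangle $\Sigma K_G \to X_3 \to C_G$ with $K_G = \Ker G(f)$ and $C_G = \Coker G(f)$. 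A further octahedral manipulation on the composition $c'' \to X_3 \to C_G$ then produces a triangle $\Sigma Z \to c'' \to C_G$ with $Z$ sitting in a short exact sequence $0 \to C_F \to Z \to K_G \to 0$ in $\cA$.

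Both $\Sigma Z \to c''$ and $c'' \to C_G$ are right $\Sigma\cA$- and left $\cA$-approximations respectively (using $E_1$), so comparing with the canonical minimal triple $\Sigma F(c'')\to c''\to G(c'')$ gives decompositions $Z\cong F(c'')\oplus N$ and $C_G\cong G(c'')\oplus N'$ for some $N,N'\in\cA$, with the summand $\Sigma N$ mapping to zero in $c''$. Computing the cone of the split map $(\phi_{c''},0)\colon \Sigma F(c'')\oplus\Sigma N \to c''$ gives $G(c'')\oplus \Sigma^2 N$, which must equal $C_G\cong G(c'')\oplus N'$; Krull-Schmidt cancellation then yields $\Sigma^2 N\cong N'$. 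But the reformulation $\cT(\Sigma^2\cA,\cA)=0$ of $E_2$ rules out any nonzero such isomorphism, so $N=N'=0$. Hence $F(c'')$ is an extension of $K_G$ by $C_F$ and $G(c'')\cong C_G$. Defining $\delta$ as the composition $F(c'')\twoheadrightarrow K_G\hookrightarrow G(c)$, the required six-term exact sequence is obtained by concatenating the short exact sequences $0\to F(c)\to F(c')\to C_F\to 0$, $0\to C_F\to F(c'')\to K_G\to 0$, $0\to K_G\to G(c)\to \Im G(f)\to 0$, $0\to \Im G(f)\to G(c')\to C_G\to 0$, together with $C_G\cong G(c'')$. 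The main obstacle is this Krull-Schmidt cancellation, which genuinely requires the full strength of $E_2$ (not merely $E_1$) together with the Krull-Schmidt hypothesis on $\cT$ to dissolve the non-minimality of the auxiliary approximations.
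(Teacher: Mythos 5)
Your argument is correct in outline but takes a genuinely different route from the paper's, and it leaves one step unverified. Both proofs apply the triangulated $3\times 3$ lemma to the commutative square built from $\phi$, but they diverge afterward. The paper works position by position: it shows $F(c)\cong\ker F(g)$ by constructing explicit mutually inverse morphisms (each zero-check powered by $E_2$), then for exactness at $F(c'')$ shows that the image embedding $\tau\colon\im F(g)\rightarrowtail F(c'')$ satisfies the universal property of $\ker\delta$ with $\delta:=\psi_c\circ h\circ\Sigma^{-1}\phi_{c''}$, and handles the $G$-positions dually. Your proof instead isolates $c''$ and identifies $F(c'')$ and $G(c'')$ structurally: the $3\times 3$ lemma plus two octahedral manipulations produce an auxiliary triangle $\Sigma Z\to c''\to C_G$ with $Z$ an extension of $\ker G(f)$ by $\Coker F(f)$, and both maps are approximations by $E_1$; then Krull-Schmidt decomposition of approximations into minimal part plus complement, together with the $E_2$-vanishing $\cT(\Sigma^2\cA,\cA)=0$ (your $N=N'=0$ cancellation), force $Z\cong F(c'')$ and $C_G\cong G(c'')$. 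This structural description of $F(c'')$ as an extension $0\to\Coker F(f)\to F(c'')\to\ker G(f)\to 0$ is an attractive byproduct that the paper's local argument does not surface. Your monomorphism argument for $F(f)$ (two iterated $\Hom$-vanishings via $E_1$ and $E_2$) is also slicker than the paper's section--retraction computation.

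The omission: the statement requires exactness of the sequence with the \emph{specific} maps $F(g)$ and $G(g)$, but you only assert that "the required six-term exact sequence is obtained by concatenating" the four short exact sequences and the isomorphism $C_G\cong G(c'')$. You must verify that the concatenated map $F(c')\twoheadrightarrow\Coker F(f)\rightarrowtail F(c'')$ actually equals $F(g)$, and dually that $G(c')\twoheadrightarrow C_G\cong G(c'')$ equals $G(g)$. This is true, but it requires a chase: in the $3\times 3$ grid the square with corners $\Sigma F(c')$, $\Sigma\Coker F(f)$, $c'$, $c''$ commutes, so the column map $\Sigma\Coker F(f)\to c''$ precomposed with the cokernel projection $\Sigma F(c')\to\Sigma\Coker F(f)$ equals $g\circ\phi_{c'}$; the octahedral compatibility identifies this column map with $\phi_{c''}$ precomposed with $\Sigma\Coker F(f)\rightarrowtail\Sigma Z\cong\Sigma F(c'')$; and the $E_2$-powered uniqueness in the construction of $F(g)$ (\Cref{lem:GF_definition}) then pins down the composition. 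This chase is of the same kind the paper carries out explicitly, and without it your concatenation produces an exact sequence between the right objects but with maps not yet identified as $F(g)$ and $G(g)$; it should be written out for your proof to be complete.
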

\begin{proof}
    Given a triangle

    \begin{equation*}
    \begin{tikzcd}
        c\rar["f"]&
        c'\rar["g"]&
        c''\rar["\Sigma h"]&
        \Sigma c
    \end{tikzcd}
    \end{equation*}
    
    with $c,c',c''\in\Sigma \cA \ast\cA$, we can get the following diagram.

    \begin{equation*}
    \begin{tikzcd}
                    F(c)      \rar["F(f)"] \dar["\Sigma^{-1}\phi_c"]&
                    F(c')     \rar["F(g)"] \dar["\Sigma^{-1}\phi_{c'}"]&
                    F(c'')                 \dar["\color{black}\Sigma^{-1}\phi_{c''}", blue]&
        \Sigma      F(c)      \rar["\Sigma F(f)"] \dar["\phi_{c}"]&
        \Sigma      F(c')     \rar["\Sigma F(g)"] \dar["\phi_{c'}"]&
        \Sigma      F(c'')                 \dar["\phi_{c''}"]&\\
        \Sigma^{-1} c        \dar["\Sigma^{-1}\psi_{c}"]   \rar["\Sigma^{-1}f"]&
        \Sigma^{-1} c'       \dar["\Sigma^{-1}\psi_{c'}"]  \rar["\Sigma^{-1}g"]&
        \Sigma^{-1} c''      \dar["\Sigma^{-1}\psi_{c''}"] \rar["\color{black}h", blue]&
                    c        \dar["\color{black}\psi_{c}", blue]              \rar["f"]&
                    c'       \dar["\psi_{c'}"]             \rar["g"]&
                    c''      \dar["\psi_{c''}"]\\
        \Sigma^{-1} G(c)     \rar["\Sigma^{-1} G(f)"]&
        \Sigma^{-1} G(c')    \rar["\Sigma^{-1} G(g)"]&
        \Sigma^{-1} G(c'')&
                    G(c)     \rar["G(f)"]&
                    G(c')    \rar["G(g)"]&
                    G(c'').
    \end{tikzcd}
    \end{equation*}

    Composing the blue arrows in the diagram above defines $\delta\coloneqq \psi_{c}\comp h\comp \Sigma^{-1}\phi_{c''}\colon F(c'')\rightarrow G(c)$.
    \medbreak

    \underline{Exact at $F(c)$.} Since $\cA$ is an abelian category there exists a conflation

    \begin{equation*}
    \begin{tikzcd}
        \ker F(g) \rar["\beta", rightarrowtail]&
        F(c') \rar["\alpha'", twoheadrightarrow]&
        \im F(g).
    \end{tikzcd}
    \end{equation*}

    By definition this conflation comes from a triangle in $\cT$.
    Consider the following commutative diagram of solid arrows.

    \begin{equation*}
    \begin{tikzcd}
        F(c)      \ar[dd,"\Sigma^{-1}\phi_{c}"']  \ar[rr,"F(f)"] \ar[dr, "\alpha", bend left=20, dashed]&&
        F(c')     \ar[dd,"\Sigma^{-1}\phi_{c'}"]  \ar[rr,"F(g)"] \ar[dr, "\alpha'", twoheadrightarrow] &&
        F(c'')    \ar[dd,"\Sigma^{-1}\phi_{c''}"]              \\ 
        & 
        \ker F(g) \ar[dl,"\varepsilon", dashed]\ar[ur,"\beta", rightarrowtail]\ar[ul, "\eta", bend left=20, dashed]&& 
        \im F(g) \ar[dr]\ar[ur,"\beta'", rightarrowtail]&\\
        \Sigma^{-1} c   \ar[rr, "\Sigma^{-1}f"]&&
        \Sigma^{-1} c'  \ar[rr, "\Sigma^{-1}g"]&&
        \Sigma^{-1} c''.
    \end{tikzcd}
    \end{equation*}
    
    Using the axiom TR3 we get that $\varepsilon$ exists.
    Since $\Sigma^{-1}\phi_{c}$ is a right $\cA$-approximation, $\eta$ exists such that $\Sigma^{-1} \phi_{c}\comp\eta = \varepsilon$.
    From the fact that $F(g)\comp F(f) = 0$ it follows that $\alpha$ exists in such a way that $\beta\comp\alpha = F(f)$.

    \indent We will check that $\varepsilon\comp \alpha = \Sigma^{-1}\phi_{c}$.
    Notice that 
    \begin{equation*}
        \Sigma^{-1}f\comp \varepsilon\comp \alpha 
        = \Sigma^{-1}\phi_{c'}\comp \beta\comp \alpha
        = \Sigma^{-1}\phi_{c'}\comp F(f)
        = \Sigma^{-1}f\comp\Sigma^{-1}\phi_{c}.
    \end{equation*}

    Therefore $\Sigma^{-1}f(\varepsilon \comp \alpha - \Sigma^{-1}\phi_{c}) = 0$. 
    This means that $\varepsilon \comp \alpha - \Sigma^{-1}\phi_{c}$ factors through $\Sigma^{-2}c''$.

    \begin{equation*}
    \begin{tikzcd}
        & F(c)\dar["\varepsilon \comp \alpha - \Sigma^{-1}\phi_{c}"]\ar[dr,"0", bend left=35]\ar[dl,dashed, bend right=35] &\\
        \Sigma^{-2}c''\rar &
        \Sigma^{-1}c\rar &
        \Sigma^{-1}c'.
    \end{tikzcd}
    \end{equation*}

    Since $\cA$ satisfies $E_2$, we have that $\Hom(\cA, \Sigma^{-1}\cA\ast\Sigma^{-2}\cA)=0$, 
    thus $\varepsilon \comp \alpha - \Sigma^{-1}\phi_{c}= 0 $, giving that $\varepsilon\comp \alpha = \Sigma^{-1}\phi_{c}$.
    \medbreak

    We now check that $\alpha\comp\eta = \id$. 
    Firstly we have that 
    \begin{equation*}
        \Sigma^{-1}\phi_{c'}\comp\beta\comp\alpha\comp\eta 
        = \Sigma^{-1}\phi_{c'}\comp F(f)\comp\eta
        = \Sigma^{-1}f\comp\Sigma^{-1}\phi_{c}\comp\eta
        = \Sigma^{-1}f\comp\varepsilon
        = \Sigma^{-1}\phi_{c'}\comp\beta.
    \end{equation*}

    Hence $\Sigma^{-1}\phi_{c'}\comp(\beta -\beta\comp\alpha\comp \eta) = 0$.
    A similar argument as before shows that $\beta -\beta\comp\alpha\comp \eta = 0$
    since it would have to factor through $\Sigma^{-2}G(c')$.
    Therefore $\beta(\id - \alpha\comp \eta) = 0$, and since $\beta$ is a monomorphism in $\cA$, this implies that $\id - \alpha\comp \eta = 0$, and therefore $\id = \alpha\comp \eta$.

    \medbreak
    Lastly we see that $\eta\comp\alpha = \id$. Notice that 

    \begin{equation*}
        \Sigma^{-1}\phi_{c}\comp \eta\comp\alpha 
        = \varepsilon\comp\alpha 
        = \Sigma^{-1}\phi_{c}.
    \end{equation*}

    Thus $\Sigma^{-1}\phi_{c}\comp(\eta\comp\alpha - \id) = 0$, meaning that $\eta\comp\alpha - \id$ factors though $\Sigma^{-2}G(c)$
    and therefore $\eta\comp\alpha - \id = 0$ by the assumption that $\cA$ satisfies $E_2$.
    This concludes the proof that $\eta\comp\alpha = \id$.
    In particular, $\alpha$ is an isomorphism making $F(f)$ a monomorphism.

    \medbreak
    \underline{Exact at $F(c')$.} We just proved that $F(c)\cong\ker F(g)$.
    This gives the following diagram.
    \begin{equation*}
    \begin{tikzcd}
        F(c)    \ar[r,"F(f)", rightarrowtail] &
        F(c')   \ar[r,"F(g)"]&
        F(c'')   
    \end{tikzcd}
    \end{equation*}
    But then $\ker F(g) \cong F(c) \cong \im F(f)$, making the sequence exact at $F(c')$.

    \medbreak
    \underline{Exact at $F(c'')$.}
    Since $F(f)$ is a monomorphism with cokernel $a=\im F(g)$, we get the following triangle.

    \begin{equation*}
    \begin{tikzcd}
        F(c) \rar["F(f)"] &
        F(c') \rar["\eta"] &
        a \rar["\mu"] &
        \Sigma F(c) \rar["\Sigma F(f)"] &
        \Sigma F(c').
    \end{tikzcd}
    \end{equation*}

    Using the $3\times3$~lemma (see \cite[prop. 1.1.11]{beilinson1983analyse} or \cite[lem. 2.6]{may2001additivity}) on the commutative square

    \begin{equation*}
    \begin{tikzcd}
        F(c) \rar["F(f)"]\dar["\Sigma^{-1}\phi_{c}"] &
        F(c')\dar["\Sigma^{-1}\phi_{c'}"] \\
        \Sigma\inv c \rar["\Sigma\inv f"]&
        \Sigma\inv c'
    \end{tikzcd}
    \end{equation*}
    
    gives that the following grid is a commutative diagram where each row and column is a triangle.
    In the diagram, the monomorphism $\tau$ exists such that $F(g)=\tau\eta$ because $\eta$ is a cokernel of $F(f)$ while $F(g)\comp F(f)=0$.

    \begin{equation*}
    \begin{tikzcd}[column sep=3.5em, row sep=3em]
        &&\tilde a \ar[dd, "\omega", near start]\ar[dr,"\nu",dashed]&&&\\
        F(c)         \rar["F(f)",rightarrowtail]        \ar[dd, "\Sigma^{-1}\phi_{c}"] &
        F(c')        \ar[rr,"\eta",twoheadrightarrow, crossing over, near start]      \ar[dd, "\Sigma^{-1}\phi_{c'}"] &&
        a            \rar["\mu"]         \ar[dd, "\gamma"]  \ar[dl, "\tau", rightarrowtail] &
        \Sigma F(c)  \rar["\Sigma F(f)"] \ar[dd, "\phi_{c}"] &
        \Sigma F(c')                     \ar[dd, "\phi_{c'}"] \\[-15pt]
                                                         &&F(c'')\ar[dr,"\color{black}\Sigma\inv\phi_{c''}", blue]&&&
        \\[-15pt]
        \Sigma\inv c   \ar[d, "\Sigma\inv\psi_{c}"]   \rar["\Sigma\inv f"]&
        \Sigma\inv c'  \ar[d, "\Sigma\inv\psi_{c'}"]  \ar[rr,"\Sigma\inv g"]&&
        \Sigma\inv c'' \ar[d, "\varepsilon"]      \rar["\color{black}h", blue]&
        c              \ar[d,"\color{black}\psi_{c}", blue]              \rar["f"]&
        c'             \ar[d, "\psi_{c'}"]\\
        \Sigma^{-1} G(c)     \rar["\Sigma^{-1} G(f)"]&
        \Sigma^{-1} G(c')    \ar[rr,"\xi"]&&
         x\rar["\zeta"]&
                    G(c)     \rar["G(f)"]&
                    G(c').   
    \end{tikzcd}
    \end{equation*}

    Recall that $\delta = \psi_{c}\comp h\comp \Sigma^{-1}\phi_{c''}$ (i.e. the composition of the blue arrows in the diagram above).
    We check that the solid triangle commutes, that is $\Sigma\inv\phi_{c''}\comp\tau = \gamma$.
    Since $\tau\eta = F(g)$, we get that
    \begin{equation*}
        (\gamma - \Sigma\inv\phi_{c''}\comp \tau)\eta
        = \gamma\comp\eta - \Sigma\inv\phi_{c''}\comp F(g)
        = \Sigma\inv g\comp\Sigma\inv\phi_{c'} -  \Sigma\inv g\comp\Sigma\inv\phi_{c'} 
        = 0.
    \end{equation*}

    Therefore $\gamma - \Sigma\inv\phi_{c''}\comp \tau$ factors through $\mu$:

    \begin{equation*}
    \begin{tikzcd}[column sep=3.5em]
        F(c') \rar["\eta"]\ar[dr, "0"', bend right] &
        a \rar["\mu"]\dar["\gamma - \Sigma\inv\phi_{c''}\comp \tau"] &
        \Sigma F(c)\ar[dl, dashed, bend left, "\exists\lambda"] \\
                      &\Sigma\inv(c'')&
    \end{tikzcd}
    \end{equation*}

    But notice that $\lambda\in\Hom(\Sigma\cA, \cA\ast\Sigma\inv\cA)$, thus $\lambda=0$ since $\cA$ satisfies $E_2$.
    This implies that $\gamma - \Sigma\inv\phi_{c''}\comp \tau = 0$, giving that $\gamma = \Sigma\inv\phi_{c''}\comp \tau$.

    \medbreak
    We now show that $\tau$ satisfies the universal property of being a kernel of $\delta$.
    Let $\tilde a\in\cA$ be given together with a morphism $\omega: \tilde a\rightarrow F(c'')$ such that $\delta\comp\omega = 0$.
    We want to show that $\omega$ factors through $\tau$.

    \medbreak
    First we notice that 
    \begin{equation*}
        0 
        = \delta\comp\omega
        = \psi_{c}\comp h\comp \Sigma^{-1}\phi_{c''}\comp\omega
        = \zeta\comp (\varepsilon\comp \Sigma^{-1}\phi_{c''}\comp\omega).
    \end{equation*}

    This means that $\varepsilon\comp \Sigma^{-1}\phi_{c''}\comp\omega$ factors through $\xi$.
    But $\Hom(\tilde a, \Sigma\inv G(c'))=0$ and therefore $\varepsilon\comp \Sigma^{-1}\phi_{c''}\comp\omega = 0$.
    This gives that $\Sigma^{-1}\phi_{c''}\comp\omega$ factors through $\gamma$, say
    $\Sigma^{-1}\phi_{c''}\comp\omega = \gamma \comp \nu$ ($\nu$ is the dashed morphism in the diagram).
    Notice that

    \begin{equation*}
        \Sigma\inv\phi_{c''}(\omega - \tau\comp\nu)
        = \Sigma\inv\phi_{c''}\comp \omega -  \gamma\nu 
        = 0.
    \end{equation*}

    Thus $\omega - \tau\comp\nu$ factors through $\Sigma^{-2} G(c'')$.

    \begin{equation*}
    \begin{tikzcd}[column sep = 3.5em]
        &\tilde a\ar[d, "\omega - \tau\comp\nu"]\ar[dr, bend left = 20, "0"]\ar[dl, bend right = 20, dashed, "0"']&\\
        \Sigma^{-2} G(c'') \rar[]&F(c'')\rar["\Sigma\inv \phi_{c''}"']& \Sigma\inv c''.
    \end{tikzcd}
    \end{equation*}

    This means that $\omega - \tau\comp\nu= 0 $, thus $\omega = \tau\comp\nu$.
    The uniqueness part of the universal property follows from $\tau$ being a monomorphism in $\cA$. This concludes the proof that $\ker\delta \cong a$.

    \medbreak
    \underline{Exact at $G(c),G(c'),G(c'')$.}
    These can be done in a dual way to showing that the sequence is exact in $F(c),F(c'),F(c'')$.
\end{proof}

\begin{lemma}
    \label{lem:intermediate_epi}
    Let $c\in\cA\ast\Sigma \cA$, and thus $c\in\Sigma \cA\ast\cA$ by \Cref{lem:proper_ab_res}(\ref{item:par1}).
    Given two triangles $a_0 \xrightarrowInline[-.5pt]{f} c \rightarrow \Sigma a_1\rightarrow \Sigma a_0$
    and $\Sigma b_0 \rightarrow c \xrightarrowInline[-.5pt]{\beta} b_1 \rightarrow \Sigma^2 b_0$, with $a_i, b_i\in\cA$, the composition $\beta\comp f:a_0\rightarrow b_1$ is an epimorphism in $\cA$.
\end{lemma}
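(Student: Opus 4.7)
The plan is to apply the snake lemma (\Cref{prop:exact_seq}) to each of the two given triangles and then combine the outputs via naturality of $\psi$ from \Cref{lem:GF_definition}. All six objects appearing in the two triangles lie in $\Sigma\cA\ast\cA$: this holds for $c$ by hypothesis, for $a_0,b_1\in\cA$ via the split triangle $0\to a\to a$, and for $\Sigma a_1,\Sigma b_0\in\Sigma\cA$ via the split triangle $\Sigma a\to\Sigma a\to 0$. These same split triangles, together with the uniqueness statement in \Cref{lem:GF_definition}, let us identify $F(a)=0$, $G(a)=a$, and $\psi_a=\id$ for every $a\in\cA$, and dually $F(\Sigma a)=a$, $G(\Sigma a)=0$ for every $\Sigma a\in\Sigma\cA$.

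With these identifications, applying \Cref{prop:exact_seq} to $a_0\xrightarrow{f} c\to\Sigma a_1\to\Sigma a_0$ collapses the eight-term sequence to the exact sequence
\begin{equation*}
    0\to F(c)\to a_1\to a_0\xrightarrow{G(f)} G(c)\to 0
\end{equation*}
in $\cA$, from which we read off that $G(f)\colon a_0\to G(c)$ is an epimorphism. Applying \Cref{prop:exact_seq} to $\Sigma b_0\to c\xrightarrow{\beta} b_1\to\Sigma^2 b_0$ collapses in the same way to $b_0\xrightarrow{\sim} F(c)$ and $G(c)\xrightarrow{G(\beta)} b_1$ an isomorphism.

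Finally, naturality of $\psi$ applied to $f$ and to $\beta$, together with $\psi_{a_0}=\id$ and $\psi_{b_1}=\id$, gives $G(f)=\psi_c\comp f$ and $\beta=G(\beta)\comp\psi_c$, so
\begin{equation*}
    \beta\comp f \;=\; G(\beta)\comp\psi_c\comp f \;=\; G(\beta)\comp G(f),
\end{equation*}
a composition of an isomorphism with an epimorphism in $\cA$, hence an epimorphism. The only nontrivial bookkeeping is the identification of $F$, $G$, and $\psi$ on objects already lying in $\cA$ or $\Sigma\cA$; once that is set up both snake sequences collapse to near-trivialities and the desired conclusion is immediate.
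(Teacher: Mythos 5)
Your proof is correct, but it takes a genuinely different route from the paper. The paper argues directly and elementarily: to show $\beta\comp f$ is an epimorphism it tests against a morphism $h\colon b_1\to d$ with $d\in\cA$ satisfying $h\comp\beta\comp f=0$; then $h\comp\beta$ factors through the cone $\Sigma a_1$ of $f$, which forces $h\comp\beta=0$ by $E_1$, and then $h$ factors through $\Sigma^2 b_0$, which forces $h=0$ by $E_2$. This uses only the triangle axioms and the vanishing conditions, and in particular does not invoke the functors $F,G$ or the snake lemma. You instead apply \Cref{prop:exact_seq} to both given triangles and use naturality of $\psi$ from \Cref{lem:GF_definition}; this is legitimate (all six terms lie in $\Sigma\cA\ast\cA$, and there is no circularity since \Cref{prop:exact_seq} does not depend on \Cref{lem:intermediate_epi}), and your identifications $F(a)=0$, $G(a)=a$, $F(\Sigma a)=a$, $G(\Sigma a)=0$ are justified by the split triangles together with the uniqueness statement in \Cref{lem:GF_definition} --- though strictly that uniqueness is only up to isomorphism, so either fix the identity triangles as your choices or carry the isomorphisms $\psi_{a_0},\psi_{b_1}$ along, which changes nothing since an epimorphism composed with isomorphisms is still an epimorphism. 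What the two approaches buy: yours is a clean illustration that the lemma is a formal consequence of the snake lemma machinery once it is in place, while the paper's is lighter-weight and self-contained, using only $E_1$ and $E_2$ directly; the paper's version also makes transparent exactly which vanishing hypotheses are responsible for the conclusion.
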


\begin{proof}
    There exists a diagram

    \begin{equation*}
    \begin{tikzcd}
        &a_0\dar["f"]&&\\
        \Sigma b_0\rar["\alpha"] &
        c\rar["\beta"]\dar["m"]&
        b_1\rar["\gamma"]\dar["h"]&
        \Sigma^2 b_0\ar[dl, "\varepsilon\ =\ 0",dashed]\\
        &\Sigma a_1\rar["\xi\ =\ 0", dashed]&d&
    \end{tikzcd}
    \end{equation*}

    where the row and column containing $c$ are triangles.
    We claim that $\beta\comp f : a_0\to b_1$ is an epimorphism.
    To show this let $d\in\cA$ and a morphism $h:b_1\to d$ be given such that $h\comp\beta\comp f = 0$.
    It is enough to show that $h = 0$.
    Since $h\comp\beta\comp f = 0$ we get that $h\comp\beta$ factors through $m$,  
    i.e. there exists a morphism $\xi:\Sigma a_1 \to d$, such that $\xi\comp m = h \comp \beta$.
    But notice that $\xi = 0$ since $\cA$ satisfies $E_1$. Thus $h \comp \beta = 0 $.
    This means that $h$ factors through $\gamma$, i.e. there exists a morphism $\varepsilon:\Sigma^2b_0\to d$
    such that $h = \varepsilon \comp \gamma$. But since $\cA$ satisfies $E_2$, we get that $\varepsilon = 0$, hence $h = 0$.
\end{proof}

\begin{proposition}
    \label{lem:fac_epi_com}
    The following are equivalent.
    \begin{enumerate}
        \item For all $a,a'\in \cA$ and $f\in\cT(a,\Sigma^2a')$ there exist $d\in\cA$, $g_1\in\cT(a,\Sigma d)$, and $g_2\in\cT(\Sigma d, \Sigma^2 a')$
            such that $f=g_2\comp g_1$.
        \item $\Sigma\cA\ast\cA=\cA\ast\Sigma\cA$.
        \item For $c\in\Sigma\cA\ast\cA$, there exist $a\in\cA$ and $f\in\cT(a, c)$,
            such that $\psi_c \comp f:a\rightarrow G(c)$ is an epimorphism.
    \end{enumerate}
\end{proposition}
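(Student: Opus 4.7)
The plan is to close the cycle by proving (1) $\Rightarrow$ (2), (2) $\Leftrightarrow$ (3), and the substantive (2) $\Rightarrow$ (1). For (1) $\Rightarrow$ (2), start from $c\in\Sigma\cA\ast\cA$ with triangle $\Sigma a_1\to c\to a_0\xrightarrow{f}\Sigma^2 a_1$, factor $f=g_2\comp g_1$ through some $\Sigma d$ using (1), and apply the octahedral axiom to $a_0\xrightarrow{g_1}\Sigma d\xrightarrow{g_2}\Sigma^2 a_1$: the fibres of $g_1$ and $g_2$ lie in $d\ast a_0\subseteq\cA$ and $\Sigma a_1\ast\Sigma d\subseteq\Sigma\cA$ respectively (using that $\cA$ is extension-closed), so the fibre of $f$, which is $c$, sits in $\cA\ast\Sigma\cA$; combined with \Cref{lem:proper_ab_res}(\ref{item:par1}) this gives (2). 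For (2) $\Rightarrow$ (3), (2) produces a triangle $d'\to c\to\Sigma d$ witnessing $c\in\cA\ast\Sigma\cA$, and \Cref{lem:intermediate_epi} applied together with the minimal triangle $\Sigma F(c)\to c\xrightarrow{\psi_c}G(c)$ makes $d'\to c\xrightarrow{\psi_c}G(c)$ an epimorphism in $\cA$; take $a=d'$. For (3) $\Rightarrow$ (2), given $\alpha:\tilde a\to c$ from (3), the octahedral axiom on $\tilde a\xrightarrow{\alpha}c\xrightarrow{\psi_c}G(c)$ produces a triangle rotating to $F(c)\to K\to K_0\to\Sigma F(c)$, where $K$ is the fibre of $\alpha$ and $K_0=\ker(\psi_c\alpha)\in\cA$ is the kernel of the epi $\psi_c\alpha$; extension-closedness of $\cA$ gives $K\in\cA$, so the triangle $\tilde a\to c\to\Sigma K$ exhibits $c\in\cA\ast\Sigma\cA$.

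For the substantive step (2) $\Rightarrow$ (1), given $f:a\to\Sigma^2 a'$, form the fibre triangle $\Sigma a'\xrightarrow{\iota}c\xrightarrow{\pi}a\xrightarrow{f}\Sigma^2 a'$. I first argue that this presentation is automatically minimal, i.e.\ $a'\cong F(c)$ and $a\cong G(c)$: the Krull--Schmidt decomposition $\Sigma a'=\Sigma F(c)\oplus\Sigma a^*$ has $\iota$ vanishing on the $\Sigma a^*$-summand, so the inclusion $i:\Sigma a^*\hookrightarrow\Sigma a'$ satisfies $\iota\comp i=0$; applying $\cT(\Sigma a^*,-)$ to the triangle makes $\iota_*$ injective (its kernel sits in the image of $\cT(\Sigma a^*,\Sigma\inv a)=\cT(a^*,\Sigma^{-2}a)=0$, using $E_2$), forcing $i=0$ and $a^*=0$; the dual argument using $\cT(\Sigma^2 a',a'')=0$ yields $a=G(c)$. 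Invoking (2), choose a triangle $d'\to c\xrightarrow{\tau}\Sigma d$ and apply the octahedral axiom to $\Sigma a'\xrightarrow{\iota}c\xrightarrow{\tau}\Sigma d$. Its composition is $\Sigma\mu$ for some $\mu:a'\to d$ in $\cA$, with cone fitting in $\Sigma a'\to\Sigma d\to\Sigma\cone(\mu)\to\Sigma^2 a'$. The octahedron produces two things: a factorisation $f=\xi\comp\kappa$ through $\Sigma\cone(\mu)$ together with the consecutive-map vanishing $\Sigma^2\mu\comp\xi=0$ (hence $\Sigma^2\mu\comp f=0$); and an identification of $\cone(\mu)$ with the fibre in $\cT$ of $d'\to G(c)$, which by \Cref{lem:intermediate_epi} is an epimorphism in $\cA$, so its fibre is its $\cA$-kernel and $\cone(\mu)\in\cA$. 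The key step, monicity of $\mu$, now follows from $\cone(\mu)\in\cA$: applying $\Hom_{\cT}(\ker\mu,-)$ to $F(c)\xrightarrow{\mu}d\to\cone(\mu)\to\Sigma F(c)$ shows the kernel inclusion factors through $\Sigma\inv\cone(\mu)\in\Sigma\inv\cA$, which vanishes by $E_1$, forcing $\ker\mu=0$.

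With $\mu$ monic in $\cA$, the conflation $a'\rightarrowtail d\twoheadrightarrow C$ (where $C=\Coker\mu\in\cA$) produces a triangle $a'\to d\to C\to\Sigma a'$ whose twice-shifted LES contains $\Hom(a,\Sigma C)\xrightarrow{\partial_*}\Hom(a,\Sigma^2 a')\xrightarrow{(\Sigma^2\mu)_*}\Hom(a,\Sigma^2 d)$. Since $\Sigma^2\mu\comp f=0$, $f$ lifts to some $g\in\Hom(a,\Sigma C)$, yielding the factorisation $f=\partial\comp g$ through $\Sigma C\in\Sigma\cA$, which is (1) with the object named ``$d$'' in the statement taken to be $C$. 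The hardest part of this plan is the combined octahedral argument establishing $\cone(\mu)\in\cA$ and hence $\mu$ monic; once that is in hand, both the minimality reduction at the start and the final LES conclusion are relatively routine bookkeeping.
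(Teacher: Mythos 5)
Your proof is correct and traces the same overall strategy as the paper: a direct octahedral argument for $(1)\Rightarrow(2)$, an application of \Cref{lem:intermediate_epi} for $(2)\Leftrightarrow(3)$, and a Lemma-3.4-plus-octahedron argument for the substantive $(2)\Rightarrow(1)$. Your $(1)\Rightarrow(2)$ and $(3)\Rightarrow(2)$ are essentially the paper's arguments with different bookkeeping of which maps the octahedron is applied to; the paper's $(2)\Rightarrow(3)$ is the same one-liner appeal to \Cref{lem:intermediate_epi}.

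For $(2)\Rightarrow(1)$ the two routes diverge but converge on the same mechanism. The paper applies the octahedral axiom to the composite $b_0\to c\to a$ (i.e.\ across the two ``presentations'' of $c$): the resulting triangle $d\to b_0\xrightarrow{h}a\to\Sigma d$ has $h$ an epimorphism by \Cref{lem:intermediate_epi}, so $d=\ker h\in\cA$, and the fourth column of the $3\times3$ grid immediately reads off $f=g_2\comp g_1$ with middle object $\Sigma d$. You instead apply the octahedron to $\Sigma a'\to c\to\Sigma d$, package the composite as $\Sigma\mu$, and show $\cone(\mu)\in\cA$ by identifying it as the fibre of the epimorphism $d'\to a$ furnished by \Cref{lem:intermediate_epi}. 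Your octahedron already hands you $f=\xi\comp\kappa$ factoring through $\Sigma\cone(\mu)\in\Sigma\cA$, so at that point you are done. The detour through the minimality of the presentation ($a'\cong F(c)$, $a\cong G(c)$) is unnecessary --- \Cref{lem:intermediate_epi} applies to arbitrary, not minimal, presentations --- and the monicity of $\mu$ followed by the long-exact-sequence lift re-derives the very factorisation the octahedron gave you. If you drop those three pieces (minimality, monicity of $\mu$, and the closing LES paragraph), what remains is a clean, correct variant of the paper's argument using a different face of the octahedron.
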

\begin{proof}
    \underline{$(1)\Rightarrow (2)$}: 
    By \Cref{lem:proper_ab_res}(\ref{item:par1}), $\Sigma\cA\ast\cA\supseteq\cA\ast\Sigma\cA$.
    To check the other inclusion let $c\in \Sigma\cA\ast\cA$, which means that there is a triangle

    \begin{equation*}
    \begin{tikzcd}
        \Sigma F(c)\rar[]&
        c\rar[]&
        G(c)\rar["\gamma"]&
        \Sigma^{2}F(c).
    \end{tikzcd}
    \end{equation*}

    By (1) there exists $d\in \cA$ and morphisms $g_1: G(c)\to\Sigma d$, $g_2:\Sigma d\to \Sigma^2 F(c)$,
    such that $g_2\comp g_1 = \gamma$.
    Using the  octahedral axiom (see \cite[prop 1.4.6]{neeman2014triangulated}) we get a diagram

    \begin{equation*}
    \begin{tikzcd}
        d \rar[] \dar&
        M \rar[] \dar&
        G(c) \rar["g_1"]\dar[equal] &
        \Sigma d\dar["g_2"]\\
        \Sigma F(c)\rar[]\dar &
        c \rar[]\dar &
        G(c) \rar["\gamma"] \dar&
        \Sigma^2 F(c)\dar\\
        N\rar[equal]\dar &
        N \rar[]\dar &
        0 \rar[]\dar &
        \Sigma N\dar\\
        \Sigma d\rar[] &
        \Sigma M \rar[] &
        \Sigma G(c)\rar[] &
        \Sigma^2 d,
    \end{tikzcd}
    \end{equation*}

    where each row and column is a triangle.
    Since $\cA$ and $\Sigma\cA$ are extension closed in $\cT$ we get that $M\in \cA$ and $N\in\Sigma \cA$.
    The second column in the diagram now implies that $c\in\cA\ast\Sigma\cA$.
    
    \medbreak
    \underline{$(2)\Rightarrow (1)$}: 
    Let $f\in\cT(a,\Sigma^2 a')$, and consider the triangle

    \begin{equation*}
    \begin{tikzcd}
        \Sigma a' \rar&
        c \rar&
        a \rar["f"]&
        \Sigma^2 a'.
    \end{tikzcd}
    \end{equation*}

    Now $c\in\Sigma\cA\ast\cA=\cA\ast\Sigma\cA$, hence there exist $b_0,b_1\in \cA$ fitting into a triangle
    
    \begin{equation*}
    \begin{tikzcd}
        b_1 \rar&
        b_0 \rar&
        c \rar&
        \Sigma b_1.
    \end{tikzcd}
    \end{equation*}

    The octahedral axiom gives the following commutative diagram with rows and columns being triangles.

    \begin{equation*}
    \begin{tikzcd}
        d \rar\dar&
        b_0 \rar["h"]\dar&
        a \rar["g_1"]\dar[equal]&
        \Sigma d\dar["g_2"]\\
        \Sigma a' \rar\dar&
        c \rar\dar&
        a \rar["f"]\dar&
        \Sigma^2 a' \dar\\
        \Sigma b_1 \rar[equal]\dar&
        \Sigma b_1 \rar\dar&
        0 \rar\dar&
        \Sigma^2 b_1\dar\\
        \Sigma d \rar&
        \Sigma b_0 \rar&
        \Sigma a \rar&
        \Sigma^{2} d.
    \end{tikzcd}
    \end{equation*}

    It follows from \Cref{lem:intermediate_epi} that $h$ is an epimorphism,
    and therefore $d\in\cA$.
    This concludes the argument since now $f = g_2\comp g_1$.
    
    \medbreak
    \underline{$(2)\Rightarrow (3)$}:
    Follows directly from \Cref{lem:intermediate_epi}.

    \medbreak
    \underline{$(3)\Rightarrow (2)$}: 
    By \Cref{lem:proper_ab_res}(\ref{item:par1}), $\cA\ast\Sigma\cA \subseteq \Sigma\cA\ast\cA$,
    hence there is only left to check that $\cA\ast\Sigma\cA \supseteq \Sigma\cA\ast\cA$.
    Let $c\in\Sigma\cA\ast\cA$, and assume we have an $a\in\cA$ and a morphism $f:a\rightarrow c$ such that $\psi_c\comp f$ is an epimorphism.
    Using the octahedral axiom we can get the following commutative diagram, with rows and columns being triangles.

    \begin{equation*}
    \begin{tikzcd}
        F(c)\dar \rar &
        0\rar\dar&
        \Sigma F(c)\dar \rar[equal] &
        \Sigma F(c)\dar \\
        \Sigma\inv M \dar \rar &
        a \dar[equal] \rar["f"]&
        c \dar["\psi_c"] \rar &
        M \dar \\
        \ker(\psi_c\comp f) \rar[rightarrowtail] &
        a\rar["\psi_c\,\comp\, f", twoheadrightarrow]&
        G(c) \rar &
        \Sigma \ker(\psi_c\comp f)
    \end{tikzcd}
    \end{equation*}
    
    Since the right column is a triangle, we get that $M\in\Sigma \cA$,
    and therefore since the middle row is a triangle $c\in\cA\ast\Sigma\cA$.
\end{proof}

\section{intermediate subcategories}
\Cref{setup} will be assumed throughout this section.

\begin{lemma}
    \label{lem:Fc_torsionless}
    Let $\cF\subseteq \cA$ be a torsion-free class. 
    Let $c \in \Sigma\cF\ast\cA$, then $F(c)\in \cF$.
\end{lemma}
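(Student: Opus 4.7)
The plan is to show directly that $F(c)$ is a direct summand of the given object $f \in \cF$, and then use closure of torsion-free classes under direct summands.

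First I would unpack the hypothesis: $c \in \Sigma\cF\ast\cA$ means there is a triangle
\begin{equation*}
\begin{tikzcd}
\Sigma f \rar["\alpha"] & c \rar & a \rar & \Sigma^{2} f
\end{tikzcd}
\end{equation*}
with $f \in \cF$ and $a \in \cA$. I would next verify that $\alpha$ is itself a right $\Sigma\cA$-approximation of $c$: given any morphism $\Sigma a' \to c$ with $a' \in \cA$, the composite $\Sigma a' \to c \to a$ lives in $\cT(\Sigma a', a)=0$ by the $E_1$ hypothesis, hence factors through $\alpha$. So $\alpha$ is a (not necessarily minimal) right $\Sigma\cA$-approximation.

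Next I would compare $\alpha$ with the minimal right $\Sigma\cA$-approximation $\phi_c\colon \Sigma F(c) \to c$ from \Cref{lem:GF_definition}. Since $\cT$ is Krull--Schmidt and both morphisms are right $\Sigma\cA$-approximations, the standard splitting argument for minimal approximations in Krull--Schmidt categories yields a decomposition $\Sigma f \cong \Sigma F(c) \oplus \Sigma g$ under which $\alpha$ restricts to $\phi_c$ on the first summand and to $0$ on the second. Concretely: $\phi_c$ factors through $\alpha$ (since $\alpha$ is a right $\Sigma\cA$-approximation), $\alpha$ factors through $\phi_c$ (since $\phi_c$ is one), and the resulting endomorphism of $\Sigma F(c)$ is an isomorphism by minimality of $\phi_c$, giving the claimed split monomorphism $\Sigma F(c) \hookrightarrow \Sigma f$. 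Applying $\Sigma^{-1}$ yields $f \cong F(c) \oplus g$ in $\cA$.

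Finally, since $\cF$ is a torsion-free class in $\cA$, it is closed under subobjects and in particular under direct summands (any summand embeds as a subobject). Thus $F(c) \in \cF$.

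The only subtle step is the Krull--Schmidt splitting that identifies $F(c)$ as a direct summand of $f$; everything else is bookkeeping with $E_1$ and the definitions. Since this splitting is a standard property of minimal right approximations in Krull--Schmidt categories, I would state it briefly and cite the argument rather than reprove it in full.
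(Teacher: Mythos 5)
Your proof is correct and follows the same route as the paper: the triangle $\Sigma f\to c\to a$ together with $E_1$ makes $\alpha$ a right $\Sigma\cA$-approximation, the Krull--Schmidt/minimality argument exhibits $F(c)$ as a direct summand of $f$, and closure of $\cF$ under summands finishes it. You have simply written out the approximation and splitting steps that the paper leaves implicit.
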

\begin{proof}
    Let $c \in \Sigma\cF\ast\cA$, then there is a triangle $\Sigma f\xto{\alpha} c\to a$, where $f\in\cF$ and $a\in\cA$.
    Since $\alpha$ is a right $\Sigma\cA$-approximation we get that the object $F(c)$ from the minimal right $\Sigma\cA$-approximation is a direct summand of $f$, and therefore $F(c)\in \cF$.
\end{proof}

\begin{theorem}[{cf. \cite[thm.~5.3]{enomoto2022grothendieck}}]
    \label{thm:intermediate_torsionfree}
    The following statements hold.
    \begin{enumerate}
        \item \label{itm:it1} If $\cF\subseteq\cA$ is a torsion-free class then $\Sigma\cF\ast\cA$ is an $\cA$-intermediate category.
            Furthermore, $F(\Sigma\cF\ast\cA) = \cF$.
        \item \label{itm:it2} Let $\pC$ be an $\cA$-intermediate category such that $\pC\subseteq \cA\ast\Sigma \cA$. Then $F(\pC)$ is torsion-free.
                Furthermore, we have that $\pC=\Sigma F(\pC)\ast\cA$.
    \end{enumerate}
\end{theorem}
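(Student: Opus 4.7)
For (\ref{itm:it1}), a direct verification suffices. The subcategory $\Sigma\cF\ast\cA$ contains $\cA$ (taking the $\Sigma\cF$-component to be $0$, since $0\in\cF$), sits inside $\Sigma\cA\ast\cA$, and is extension-closed and closed under summands by \Cref{lem:proper_ab_res}(\ref{item:par2})--(\ref{item:par3}); hence it is $\cA$-intermediate. The equality $F(\Sigma\cF\ast\cA)=\cF$ combines \Cref{lem:Fc_torsionless} (for $\subseteq$) with the observation that $F(\Sigma f)=f$ for $f\in\cF$, read off from the triangle $\Sigma f\to\Sigma f\to 0$.

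For (\ref{itm:it2}), the inclusion $\pC\subseteq\Sigma F(\pC)\ast\cA$ is immediate from the defining triangle $\Sigma F(c)\to c\to G(c)$ of each $c\in\pC$. The substantial content lies in showing that $F(\pC)$ is torsion-free and that $\Sigma F(\pC)\ast\cA\subseteq\pC$. For the torsion-free property, the crucial input is $\pC\subseteq\cA\ast\Sigma\cA$: for each $x\in\pC$, the $\cA\ast\Sigma\cA$-triangle presents $x$ as the cone of a morphism $\delta\colon a_1\to a_0$ in $\cA$, and applying \Cref{prop:exact_seq} to this triangle identifies $F(x)=\Ker\delta$ and $G(x)=\Coker\delta$. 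Subobject closure then follows by starting from a monomorphism $a\rightarrowtail F(x)=\Ker\delta$ and constructing a modification of $\delta$ whose cone provides a new object $y\in\pC$ with $F(y)=a$. Extension closure is handled by realising a given extension $F(x_1)\rightarrowtail m\twoheadrightarrow F(x_2)$ as $F$ applied to a triangle $x_1\to x_3\to x_2\to\Sigma x_1$ in $\pC$ (with $x_3\in\pC$ by extension closure), and reading off $F(x_3)=m$ from \Cref{prop:exact_seq}.

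For the reverse inclusion, given a triangle $\Sigma F(x)\to c\to a\to\Sigma^2 F(x)$ with $x\in\pC$ and $a\in\cA$, I would form the homotopy pushout $y$ of the map $\Sigma F(x)\to c$ along $\phi_x\colon\Sigma F(x)\to x$. This yields a triangle $x\to y\to a\to\Sigma x$, placing $y\in\pC$ by extension closure, together with a triangle $c\to y\to G(x)\to\Sigma c$. Splitting of the latter would exhibit $c$ as a direct summand of $y\in\pC$, yielding $c\in\pC$ by summand closure. The splitting --- equivalently, the vanishing of the connecting map, which factors as $G(x)\to\Sigma^2 F(x)\to\Sigma c$ --- is where I expect the main obstacle, and its proof should combine the already-established torsion-free property of $F(\pC)$ with the epimorphism furnished by \Cref{lem:intermediate_epi} under the hypothesis $\pC\subseteq\cA\ast\Sigma\cA$.
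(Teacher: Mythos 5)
Part~(\ref{itm:it1}) of your proposal matches the paper's argument exactly. Part~(\ref{itm:it2}) does not: you have the logical order reversed, and both halves of your argument have gaps.

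The paper's proof of~(\ref{itm:it2}) first establishes the single key fact that $\Sigma F(c)\in\pC$ for every $c\in\pC$, and then derives everything else from it in a few lines. To get $\Sigma F(c)\in\pC$, it invokes \Cref{lem:intermediate_epi} to produce $b\in\cA$ and $\alpha\colon b\to c$ with $\psi_c\comp\alpha$ an epimorphism, then completes the square with vertical arrows $\begin{psmallmatrix}0\\1\end{psmallmatrix}\colon\Sigma F(c)\to b\oplus\Sigma F(c)$ and $\phi_c\colon\Sigma F(c)\to c$ via the octahedral axiom. The resulting middle row is a triangle $M\to b\oplus\Sigma F(c)\to c\to\Sigma M$ in which $M=\ker(\psi_c\comp\alpha)\in\cA$; extension closure of $\pC$ gives $b\oplus\Sigma F(c)\in\pC$, and summand closure gives $\Sigma F(c)\in\pC$. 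Once this is known, $\Sigma F(\pC)\ast\cA\subseteq\pC$ is immediate from extension closure, and torsion-freeness of $F(\pC)$ falls out: for an extension $F(c)\rightarrowtail d\twoheadrightarrow F(c')$ one suspends to a triangle of objects of $\pC$ and reads off $\Sigma d\in\pC$, hence $d=F(\Sigma d)\in F(\pC)$; for a subobject $d'\rightarrowtail F(c)$ one rotates the short exact sequence to $a\to\Sigma d'\to\Sigma F(c)\to\Sigma a$ and again uses extension closure.

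Your proposal tries to prove torsion-freeness \emph{before} knowing $\Sigma F(\pC)\subseteq\pC$, and both pieces are unjustified. For subobject closure you say you would ``construct a modification of $\delta$'' whose cone lies in $\pC$; but $\pC$ has no reason to contain the cone of an arbitrary morphism in $\cA$, so you would still need $\Sigma F(\pC)\subseteq\pC$ to conclude. For extension closure you claim to realise a given conflation $F(x_1)\rightarrowtail m\twoheadrightarrow F(x_2)$ as $F$ of a triangle $x_1\to x_3\to x_2\to\Sigma x_1$ in $\pC$; there is no reason such a lift exists, and $F$ is not full in a way that would make it automatic.

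Your homotopy-pushout argument for the reverse inclusion also stalls at exactly the point you flag. The connecting morphism $G(x)\to\Sigma c$ in the triangle $c\to y\to G(x)\to\Sigma c$ factors as $G(x)\to\Sigma^2 F(x)\xrightarrow{\Sigma g}\Sigma c$, where $G(x)\to\Sigma^2F(x)$ is the connecting morphism of the defining triangle of $x$ and is nonzero precisely when $x$ does not split as $\Sigma F(x)\oplus G(x)$. There is no vanishing available from $E_2$ here (the relevant Hom groups are $\Ext^1$ and $\Ext^2$ in $\cA$, not negative extensions), and neither the torsion-free property of $F(\pC)$ nor \Cref{lem:intermediate_epi} obviously kills it in the way you suggest. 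The paper sidesteps the splitting question entirely by never trying to split that triangle; instead it builds, via the epimorphism from \Cref{lem:intermediate_epi}, a \emph{different} triangle $M\to b\oplus\Sigma F(c)\to c$ whose left term is already in $\cA\subseteq\pC$, so no splitting is needed --- only extension and summand closure.
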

\begin{proof}
    (\ref{itm:it1}) It follows directly from \Cref{lem:proper_ab_res} that $\Sigma\cF\ast\cA$ is an $\cA$-intermediate category.  
    Furthermore, the claim that $F(\Sigma\cF\ast\cA)=\cF$ follows from \Cref{lem:Fc_torsionless},
        together with the fact that $F(\Sigma f) = f$ for all $f\in\cF$.

    \medbreak
    (\ref{itm:it2})
    That $\pC\subseteq \Sigma F(\pC)\ast \cA$ follows directly from the fact that there for each $c\in\pC$ is a triangle

    \begin{equation*}
    \begin{tikzcd}
        \Sigma F(c)\rar& c\rar&G(c)\rar&\Sigma^2 F(c).
    \end{tikzcd}
    \end{equation*}

    To see that $\pC\supseteq \Sigma F(\pC)\ast \cA$ it is enough to check that $\Sigma F(\pC)\subseteq \pC$.
    Let $c\in\pC$. 
    We will check that $\Sigma F(c)\in\pC$. There is a triangle

    \begin{equation*}
    \begin{tikzcd}
        \Sigma F(c)\rar["\phi_c"]& 
        c\rar["\psi_c"]&
        G(c)\rar&
        \Sigma^2 F(c).
    \end{tikzcd}
    \end{equation*}

    Since $\pC\subseteq \cA\ast\Sigma\cA$, \Cref{lem:intermediate_epi} says that there exists an object $b\in \cA$ and a morphism $\alpha: b\rightarrow c$
    such that $\psi_c\comp\alpha : b\to G(c)$ is an epimorphism.
    Consider the following commutative square

    \begin{equation*}
        \begin{tikzcd}[ampersand replacement = \&]
        \Sigma F(c)\rar[equal]\dar["\begin{psmallmatrix}0\\1\end{psmallmatrix}"]\&
        \Sigma F(c)\dar["\phi_c"]\\
        b\oplus\Sigma F(c)\rar["{\begin{psmallmatrix}\alpha&\phi_c\end{psmallmatrix}}"]\&
        c.
    \end{tikzcd}
    \end{equation*}

    Using the octahedral axiom we can complete this into a commutative diagram

    \begin{equation*}
    \begin{tikzcd}[ampersand replacement = \&]
        0\rar\dar\&
        \Sigma F(c)\rar[equal]\dar["\begin{psmallmatrix}0\\1\end{psmallmatrix}"]\&
        \Sigma F(c)\dar["\phi_c"]\\
        M\rar\dar[equal]\&
        b\oplus\Sigma F(c)\rar["{\begin{psmallmatrix}\alpha&\phi_c\end{psmallmatrix}}"]
            \dar["{\begin{psmallmatrix}1&0\end{psmallmatrix}}"]\&
        c\dar["\psi_c"]\\
        M\rar\&
        b\rar["\delta"]\&
        G(c),
    \end{tikzcd}
    \end{equation*}

    where the rows and columns are short triangles. 
    Since the lower right square commutes we get that $\delta = \psi_c\comp \alpha$ is an epimorphism.
    This implies that $M\in \cA$.
    Using that the middle row in the diagram is a triangle we get that $b\oplus \Sigma F(c)\in\pC$ due to $\pC$ being extension closed.
    In particular, this means that $\Sigma F(c)\in\pC$ given that $\pC$ is closed under direct summands.
    This implies that $\pC = \Sigma F(c)\ast \cA$.

    \medbreak
    To show that $F(\pC)$ is a torsion-free class, we first need to check that it is extension closed.
    Let $c,c'\in\pC$ and assume that there is a conflation
    \begin{equation*}
    \begin{tikzcd}
        F(c)\rar[rightarrowtail]&
        d\rar[twoheadrightarrow]&
        F(c')
    \end{tikzcd}
    \end{equation*}
    
    in $\cA$.
    We just saw that $\Sigma F(c), \Sigma F(c')\in \pC$, and since $\pC$ is extension closed this implies that $\Sigma d\in\pC$. 
    Since $d\in\cA$ we have $d=F(\Sigma d)$, thus $d\in F(\pC)$.
    Lastly to show that $F(\pC)$ is closed under subobjects, let $F(c)\in F(\pC)$, and let $d'\in\cA$ be a subobject.
    This gives a triangle
    \begin{equation*}
    \begin{tikzcd}
        a\rar&
        \Sigma d'\rar&
        \Sigma F(c)\rar&
        \Sigma a
    \end{tikzcd}
    \end{equation*}

    with $a\in\cA$.
    Since $\pC$ is closed under extensions, $\Sigma d'\in\pC$.
    Given that $\Sigma d'\in\Sigma \cA$, we get that $G(\Sigma d') = 0$ and thus $\Sigma F(\Sigma d') = \Sigma d'$.
    Therefore $d' = F(\Sigma d')\in F(\pC)$.
\end{proof}

\begin{corollary}
    \label{cor:intermediate-bijection}
    If $\Sigma \cA\ast\cA=\cA\ast\Sigma \cA$
    then there is a bijection
    \begin{align*}
        \set{\pC\subseteq \cT\mid \pC \text{ is }\cA\text{-intermediate}} &\xleftrightarrow{\hspace{.5em}1:1\hspace{.5em}} \set{\cF\subseteq\cA\mid \cF \text{ torsion-free}}\\[-3pt]
        \pC&\xmapsto{\phantom{\hspace{.5em}1:1\hspace{.5em}}} F(\pC)\\[-5pt]
        \Sigma\cF\ast\cA&\xmapsfrom{\phantom{\hspace{.5em}1:1\hspace{.5em}}} \cF
    \end{align*}
\end{corollary}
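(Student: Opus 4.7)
The plan is to observe that \Cref{thm:intermediate_torsionfree} has already done all the heavy lifting, and the corollary amounts to checking that the two assignments are mutually inverse under the extra hypothesis $\Sigma\cA\ast\cA=\cA\ast\Sigma\cA$.

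First I would verify that the two maps are well-defined. The assignment $\cF\mapsto\Sigma\cF\ast\cA$ lands in $\cA$-intermediate categories by \Cref{thm:intermediate_torsionfree}(\ref{itm:it1}), with no extra hypothesis needed. For the other direction, given an $\cA$-intermediate category $\pC$, by definition $\pC\subseteq\Sigma\cA\ast\cA$, and the new hypothesis $\Sigma\cA\ast\cA=\cA\ast\Sigma\cA$ immediately gives $\pC\subseteq\cA\ast\Sigma\cA$. Hence \Cref{thm:intermediate_torsionfree}(\ref{itm:it2}) applies, so $F(\pC)$ is a torsion-free class in $\cA$. This is the only place the extra hypothesis is used, and it is the step that makes part (\ref{itm:it2}) of the theorem applicable universally rather than only to intermediate categories already known to sit inside $\cA\ast\Sigma\cA$.

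Next I would check that the two round-trip composites are the identity. Starting from a torsion-free class $\cF$, the equality $F(\Sigma\cF\ast\cA)=\cF$ is precisely the ``Furthermore'' statement in \Cref{thm:intermediate_torsionfree}(\ref{itm:it1}). Starting from an $\cA$-intermediate category $\pC$, the equality $\pC=\Sigma F(\pC)\ast\cA$ is precisely the ``Furthermore'' statement in \Cref{thm:intermediate_torsionfree}(\ref{itm:it2}), which again is available thanks to $\pC\subseteq\cA\ast\Sigma\cA$.

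I do not expect any genuine obstacle: the corollary is essentially a bookkeeping statement assembling the two halves of \Cref{thm:intermediate_torsionfree}. The only subtlety worth flagging is the role of the hypothesis $\Sigma\cA\ast\cA=\cA\ast\Sigma\cA$, which is used exactly once, to ensure that every $\cA$-intermediate category falls within the scope of part (\ref{itm:it2}); without it, part (\ref{itm:it2}) still produces a torsion-free class from any $\pC\subseteq\cA\ast\Sigma\cA$, but the surjectivity of $\pC\mapsto F(\pC)$ onto torsion-free classes (and hence the bijection) could fail.
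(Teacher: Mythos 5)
Your proposal is correct and follows exactly the route of the paper, which simply cites \Cref{thm:intermediate_torsionfree}; your write-up just makes explicit the bookkeeping (the hypothesis $\Sigma\cA\ast\cA=\cA\ast\Sigma\cA$ putting every $\cA$-intermediate category in the scope of part (\ref{itm:it2}), and the two ``Furthermore'' statements giving mutual inverses).
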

\begin{proof}
    Follows directly from \Cref{thm:intermediate_torsionfree}.
\end{proof}

\begin{remark}
    \Cref{cor:intermediate-bijection} uses the assumption that $\Sigma \cA \ast \cA = \cA \ast\Sigma \cA$.
    Notice that \Cref{lem:fac_epi_com} gives some statements that are equivalent to this. 
\end{remark}

\begin{theorem}
   Assume that $\cT$ is skeletally small. Let $\cF\subseteq \cA$ be a torsion-free class, then the monoid morphism $i:M(\cA)\to M(\Sigma\cF\ast\cA)$ induced by the inclusion $\cA\to \Sigma\cF\ast\cA$  
   induces an isomorphism
   \begin{equation*}
       M(\cA)_{M_\cF}\to M(\Sigma\cF\ast\cA),
   \end{equation*}
   where $M_\cF = \set{[x]\in M(\cA) \mid x\in \cF}$.
\end{theorem}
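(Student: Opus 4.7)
The plan is to produce an explicit two-sided inverse to the map $j\colon M(\cA)_{M_\cF}\to M(\Sigma\cF\ast\cA)$ that $i$ descends to through the localization, using the snake lemma \Cref{prop:exact_seq} as the main ingredient.

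First I would verify that $i$ sends every element of $M_\cF$ to an invertible element, so that $j$ exists by the universal property of monoid localization. Concretely, for $f\in\cF$ the triangle $f\to 0\to\Sigma f$ in $\cT$ has all three vertices in $\Sigma\cF\ast\cA$ (since $\cA\cup\Sigma\cF\subseteq\Sigma\cF\ast\cA$), hence is a conflation in the inherited extriangulated structure, yielding $[f]+[\Sigma f]=0$ in $M(\Sigma\cF\ast\cA)$ and so $i([f])^{-1}=[\Sigma f]$.

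Next I would construct the candidate inverse $\bar\mu\colon M(\Sigma\cF\ast\cA)\to M(\cA)_{M_\cF}$. Define $\mu\colon\Iso(\Sigma\cF\ast\cA)\to M(\cA)_{M_\cF}$ by $\mu([c])=[G(c)]-[F(c)]$, which makes sense because $F(c)\in\cF$ by \Cref{lem:Fc_torsionless}. To apply the universal property of the Grothendieck monoid it suffices to check that $\mu$ respects conflations. Given a conflation $c\to c'\to c''$, which by definition comes from a triangle in $\cT$ with all three vertices in $\Sigma\cF\ast\cA\subseteq\Sigma\cA\ast\cA$, \Cref{prop:exact_seq} yields an exact sequence
\begin{equation*}
0\to F(c)\to F(c')\to F(c'')\to G(c)\to G(c')\to G(c'')\to 0
\end{equation*}
in $\cA$. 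Splitting this into four short exact sequences and substituting gives the monoid identity $[F(c)]+[F(c'')]+[G(c')]=[F(c')]+[G(c)]+[G(c'')]$ in $M(\cA)$. Pushing into $M(\cA)_{M_\cF}$, where each $[F(-)]$ is invertible, one rearranges this to $\mu([c'])=\mu([c])+\mu([c''])$. Together with $\mu([0])=0$, this produces $\bar\mu$.

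Finally I would check on generators that $j$ and $\bar\mu$ are mutually inverse. For $a\in\cA$ the $E_1$-hypothesis ensures $F(a)=0$ and $G(a)=a$, so $\bar\mu(j([a]))=[a]$; and $j([f]^{-1})=[\Sigma f]$ for $f\in\cF$, where $F(\Sigma f)=f$ and $G(\Sigma f)=0$ give $\bar\mu([\Sigma f])=-[f]=[f]^{-1}$. Conversely, for $c\in\Sigma\cF\ast\cA$ the triangle $\Sigma F(c)\to c\to G(c)$ of \Cref{lem:GF_definition} is a conflation in $\Sigma\cF\ast\cA$, so $[c]=[\Sigma F(c)]+[G(c)]$ and $j(\bar\mu([c]))=[c]$. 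The main obstacle is simply the well-definedness of $\mu$; once the snake lemma's six-term sequence is translated into the above monoid identity, everything else is bookkeeping around which terms become invertible after localization.
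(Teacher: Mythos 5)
Your proposal is correct and is essentially the paper's argument: the paper simply cites \cite[thm.~5.4]{enomoto2022grothendieck} and notes that \Cref{prop:exact_seq} makes that proof go through verbatim, and your explicit inverse $[c]\mapsto[G(c)]-[F(c)]$, with well-definedness checked via the six-term exact sequence and the invertibility of $[F(c)]$ for $F(c)\in\cF$, is exactly that argument transplanted from homology functors to $F$ and $G$. The only cosmetic point is that $F(a)=0$, $G(a)=a$ and $F(\Sigma f)=f$, $G(\Sigma f)=0$ follow from the uniqueness of the minimal approximation triangle in \Cref{lem:GF_definition} (Krull--Schmidt plus $E_2$) rather than from $E_1$ alone.
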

\begin{proof}
    Using \Cref{prop:exact_seq}, this proof is exactly the same as in \cite[thm. 5.4]{enomoto2022grothendieck}.
\end{proof}

\section{Examples}
    \begin{lemma}[{\cite[lem.~3.1]{holm2013sparseness}}]
        \label{lem:neg_cy_triv_heart}
        Let $\cC$ be a $(-w)$-CY category for $w\in\NN$. Then for each t-structure $(X,Y)$ the associated heart $H=X\cap\Sigma Y = 0$.
    \end{lemma}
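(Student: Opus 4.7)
The plan is to leverage the $(-w)$-Calabi-Yau duality together with the orthogonality axiom of the t-structure to force $\cC(h,h)=0$ for every $h\in H$, and conclude $h\cong 0$.

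First I would record the standard properties of a t-structure $(X,Y)$ in the convention for which the heart equals $X\cap\Sigma Y$: namely $\Sigma X\subseteq X$, $\Sigma\inv Y\subseteq Y$, and the orthogonality $\cC(X,Y)=0$. Fix $h\in H=X\cap\Sigma Y$. Then $h\in X$ and $\Sigma\inv h\in Y$, and iterating the closure $\Sigma\inv Y\subseteq Y$ gives $\Sigma^{-n}h\in Y$ for every $n\geq 1$. In particular, using $w\in\NN$ with $w\geq 1$, we have $\Sigma^{-w}h\in Y$.

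Next I would invoke the $(-w)$-Calabi-Yau property, which supplies a natural isomorphism of the form $\cC(h,h)\cong D\cC(h,\Sigma^{-w}h)$ for the $k$-linear duality $D$; in particular $\cC(h,h)=0$ if and only if $\cC(h,\Sigma^{-w}h)=0$. Since $h\in X$ and $\Sigma^{-w}h\in Y$, the orthogonality gives $\cC(h,\Sigma^{-w}h)=0$. Therefore $\cC(h,h)=0$, so $\id_h=0$, and $h\cong 0$.

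No substantive obstacle arises; the whole argument is a brief piece of t-structure bookkeeping glued to a single application of Calabi-Yau duality. The only subtlety is matching conventions correctly: the heart being $X\cap\Sigma Y$ rather than $X\cap Y$ is exactly what places $\Sigma\inv h$ in $Y$, and is therefore what allows the shift $\Sigma^{-w}$ (with $w\geq 1$) to land in $Y$ so that the orthogonality axiom can do the work.
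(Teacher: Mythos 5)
Your argument is correct and is essentially the proof given in the cited reference \cite[lem.~3.1]{holm2013sparseness}: take $h$ in the heart, use the Serre duality $\cC(h,h)\cong D\cC(h,\SS h)=D\cC(h,\Sigma^{-w}h)$, and kill the right-hand side via the orthogonality $\cC(X,Y)=0$ together with closure of $Y$ under $\Sigma^{-1}$, concluding $\id_h=0$. (You are right to flag that one needs $w\geq 1$ for the shift to land in $Y$; this is the reading of $w\in\NN$ intended here, as the negative cluster categories in this paper are only defined for $w\geq 1$.)
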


    Given integers $w\geq 1$, $n\geq 1$, one can define the negative cluster category as the orbit category
    \begin{equation*}
        \pC_{-w}(A_n) \coloneqq D^b(kA_n)/\Sigma^{w+1}\tau.
    \end{equation*}

    This is a triangulated category, and it is $(-w)$-Calabi--Yau, 
    i.e. there is a Serre functor given by $\SS = \Sigma^{-w}$.
    In \cite[sec. 10]{coelhosimoes2016torsion} they give a complete combinatorial model of this category.
    The indecomposable objects in $\pC_{-w}(A_n)$
    can be matched with certain diagonals in an $N$-gon, where $N=(w+1)(n+1)-2$.
    That is, if we label the vertices in the $N$-gon by $0,...,N-1$,
    we can identify an indecomposable object $X\in\pC_{-w}(A_n)$ as a pair of numbers $X = (a,b)$,
    where $a$ and $b$ correspond to the endpoints of the appropriate diagonal.
    A diagonal $(a,b)$, with $a<b$, corresponds to an indecomposable if $w+1 \mid b-a+1$. 
    We call these \emph{admissible diagonals}.
    See \Cref{fig:sms} for an example of such admissible diagonals.
    \medbreak
    
    There is also a combinatorial method to find proper abelian subcategories in $\pC_{-w}(A_n)$, using simple-minded systems. 
    A collection $\cS$ consisting of $n$ non-crossing admissible diagonals, with no two diagonals sharing an endpoint, corresponds to a 
    $w$-simple-minded system in $\pC_{-w}(A_n)$ (see \cite[def. 1.2]{jorgensen2022abelian} for a definition).
    given such a simple-minded system $\cS$, one can obtain the corresponding abelian category $\langle\cS\rangle$ by closing it under extensions. 
    See \Cref{fig:sms} for an example of such a collection.

    \begin{figure}[ht]
        \begin{tikzpicture}
            \node[regular polygon,regular polygon sides=10,minimum size=4cm,draw] (a){};
            \node[regular polygon,regular polygon sides=10,minimum size=4.5cm,draw, opacity=0] (b){};
            \foreach \x in {1,...,9}{\node[] at (b.corner \x) {\x};}
            \node[] at (b.corner 10) {0};
            \draw[] (a.corner 1) -- (a.corner 3);
            \draw[] (a.corner 10) -- (a.corner 5);
            \draw[] (a.corner 7) -- (a.corner 9);
        \end{tikzpicture}
        \caption{A simple-minded systems for $\pC_{-2}(A_3)$.}
        \label{fig:sms}
    \end{figure}
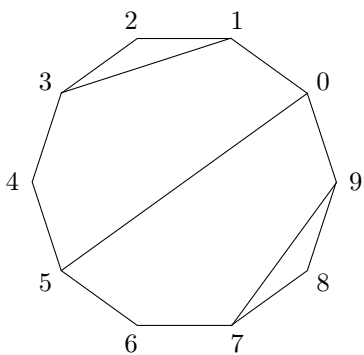

    \medbreak
    Notice that due to \Cref{lem:neg_cy_triv_heart} there is no t-structure with a non-trivial heart in any of the negative cluster categories.
    This means that the proper abelian subcategories obtained from simple-minded systems in $\pC_{-w}(A_n)$ are not hearts of t-structures.
    
    \begin{example}
        \label{ex:neg_cluster_cat}
        Consider $\pC_{-3}(A_4)$, the AR-quiver of which can be seen in \Cref{arq:c3a4}.
        The labels of the objects correspond to admissible diagonals of an 18-gon.
        This is a category where we can find proper abelian subcategories 
        that satisfy the properties needed to use \Cref{cor:intermediate-bijection}.
        Consider the collection of indecomposable objects 
        \begin{equation*}
            \cS = \set{(0,3), (4,11), (5,8), (12,15)}.
        \end{equation*}

        This is a $3$-simple-minded system. 
        Consider the proper abelian subcategory $\cA\coloneqq \langle\cS\rangle$ induced by $\cS$.
        In \Cref{arq:c3a4} we can see the indecomposable objects of $\cA$ as those marked with red discs.
        Similarly, we can see $\Sigma \cA$ in \Cref{arq:c3a4} marked with blue discs.
        It is straightforward to check that $\cA$ satisfies $E_2$ and that 
        $\Sigma\cA \ast \cA = \cA \ast\Sigma \cA$.

    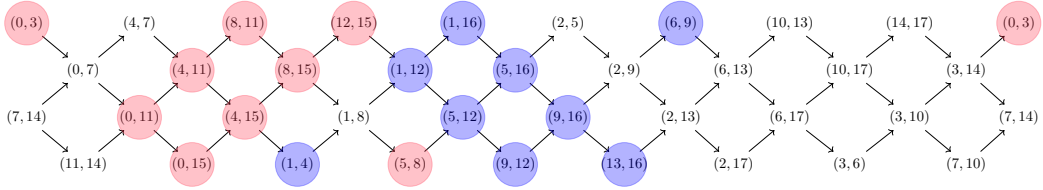
\begin{figure}[ht]
    \scalebox{.55}{
        \begin{tikzcd}[row sep=15pt, column sep = 0pt, ampersand replacement=\&,
            every matrix/.append style={name=diag},
            nodes in empty cells,
            execute at end picture={
                \filldraw [myred, opacity=0.3] (diag-1-19) circle (15pt);
                \filldraw [myred, opacity=0.3] (diag-1-1) circle (15pt);
                \filldraw [myred, opacity=0.3] (diag-2-4) circle (15pt);
                \filldraw [myred, opacity=0.3] (diag-4-8) circle (15pt);
                \filldraw [myred, opacity=0.3] (diag-1-7) circle (15pt);
                \filldraw [myred, opacity=0.3] (diag-3-3) circle (15pt);
                \filldraw [myred, opacity=0.3] (diag-1-5) circle (15pt);
                \filldraw [myred, opacity=0.3] (diag-3-5) circle (15pt);
                \filldraw [myred, opacity=0.3] (diag-4-4) circle (15pt);
                \filldraw [myred, opacity=0.3] (diag-2-6) circle (15pt);
                \filldraw [blue, opacity=0.3] (diag-4-6) circle (15pt);
                \filldraw [blue, opacity=0.3] (diag-3-9) circle (15pt);
                \filldraw [blue, opacity=0.3] (diag-1-13) circle (15pt);
                \filldraw [blue, opacity=0.3] (diag-4-12) circle (15pt);
                \filldraw [blue, opacity=0.3] (diag-2-8) circle (15pt);
                \filldraw [blue, opacity=0.3] (diag-4-10) circle (15pt);
                \filldraw [blue, opacity=0.3] (diag-2-10) circle (15pt);
                \filldraw [blue, opacity=0.3] (diag-1-9) circle (15pt);
                \filldraw [blue, opacity=0.3] (diag-3-11) circle (15pt);
            }]
        {(0,3)}\drar\&\&
        {(4,7)}\drar\&\&
        {(8,11)}\drar\&\&
        {(12,15)}\drar\&\&
        {(1,16)}\drar\&\&
        {(2,5)}\drar\&\&
        {(6,9)}\drar\&\&
        {(10,13)}\drar\&\&
        {(14,17)}\drar\&\&
        {(0,3)}\&\&\\
        \&{(0,7)}\drar\urar\&\&
        {(4,11)}\drar\urar\&\&
        {(8,15)}\drar\urar\&\&
        {(1,12)}\drar\urar\&\&
        {(5,16)}\drar\urar\&\&
        {(2,9)}\drar\urar\&\&
        {(6,13)}\drar\urar\&\&
        {(10,17)}\drar\urar\&\&
        {(3,14)}\drar\urar\&\&\\
        {(7,14)}\drar\urar\&\&
        {(0,11)}\drar\urar\&\&
        {(4,15)}\drar\urar\&\&
        {(1,8)}\drar\urar\&\&
        {(5,12)}\drar\urar\&\&
        {(9,16)}\drar\urar\&\&
        {(2,13)}\drar\urar\&\&
        {(6,17)}\drar\urar\&\&
        {(3,10)}\drar\urar\&\&
        {(7,14)}\&\&\\
        \&{(11,14)}\urar\&\&
        {(0,15)}\urar\&\&
        {(1,4)}\urar\&\&
        {(5,8)}\urar\&\&
        {(9,12)}\urar\&\&
        {(13,16)}\urar\&\&
        {(2,17)}\urar\&\&
        {(3,6)}\urar\&\&
        {(7,10)}\urar\&\&
    \end{tikzcd}
    }
    \caption{AR quiver for $C_{-3}(A_4)$. The red discs indicate $\cA$, and the blue discs indicate $\Sigma \cA$.}
    \label{arq:c3a4}
    \end{figure}
    
    \indent Using \cite[thm. 4.6]{jorgensen2022abelian} gives $\cA \cong \mod(A)$, where $A = kQ/I$ with
    \begin{equation*}
        Q: \quad\begin{tikzcd}
        & 4\dar &\\
        3\rar["\alpha"] &
        2\rar["\beta"]  &
        1
    \end{tikzcd}
    \qquad \qquad
    I = \langle\alpha \beta\rangle.
    \end{equation*}
    Calculating the AR quiver of $A$ results in the following:
    \begin{equation*}
        \adjustbox{scale=.8}{%
    \begin{tikzcd}[row sep=5pt, column sep = 5pt]
        &&&\qrep{3\\2}\ar[dr,shorten=-3mm]&&\qrep{4}\\
        \qrep{1}\ar[dr,shorten=-3mm]&&\qrep{2}\ar[ur,shorten=-3mm]\ar[dr,shorten=-3mm]&&\qrep{3,,4\\,2,}\ar[ur,shorten=-3mm]\ar[dr,shorten=-3mm]\\
        &\qrep{2\\1}\ar[ur,shorten=-3mm]\ar[dr,shorten=-3mm]&&\qrep{4\\2}\ar[ur,shorten=-3mm]&&\qrep{3}\\
        &&\qrep{4\\2\\1}\ar[ur,shorten=-3mm]&&&
    \end{tikzcd}
    }
    \text{where}\hspace{3pt}
    \adjustbox{scale=.9}{%
        \(\begin{array}{ccc}
            \qrepbox{1}{1} \sim (0,3) &\qrepbox{1}{2\\1} \sim (0,11) &\qrepbox{1}{4\\2\\1} \sim (0,15)\\[30pt]
            \qrepbox{1}{2} \sim (4,11) &\qrepbox{1}{4\\2} \sim (4,15) &\qrepbox{1}{3\\2} \sim (8,11)  \\[23pt]

            \qrepbox{2}{3,,4\\,2,} \sim (8,15) &\qrepbox{1}{4} \sim (12,15) &\qrepbox{1}{3} \sim (5,8).\\
        \end{array}\)
    }
    \end{equation*}

    Using this we can calculate the torsion-free classes of $\cA$ (see \Cref{fig:d4_torf}).
    By \cref{cor:intermediate-bijection} these torsion-free classes will correspond exactly to the $\cA$-intermediate subcategories
    in $\pC_{-3}(A_4)$.
    As an example, choose $\cF = \add((0,3),(0,11), (4,11), (8,11))$, then $\pC = \Sigma\cF \ast \cA$ is an $\cA$-intermediate subcategory (see \Cref{arq:c3a4_intermediate1}).
    
    \begin{figure}[ht]
    \scalebox{.55}{
        \begin{tikzcd}[row sep=15pt, column sep = 0pt, ampersand replacement=\&,
            every matrix/.append style={name=diag},
            nodes in empty cells,
            execute at end picture={
                \filldraw [myred, opacity=0.3] (diag-1-19) circle (15pt);
                \filldraw [myred, opacity=0.3] (diag-1-1) circle (15pt);
                \filldraw [myred, opacity=0.3] (diag-2-4) circle (15pt);
                \filldraw [myred, opacity=0.3] (diag-4-8) circle (15pt);
                \filldraw [myred, opacity=0.3] (diag-1-7) circle (15pt);
                \filldraw [myred, opacity=0.3] (diag-3-3) circle (15pt);
                \filldraw [myred, opacity=0.3] (diag-1-5) circle (15pt);
                \filldraw [myred, opacity=0.3] (diag-3-5) circle (15pt);
                \filldraw [myred, opacity=0.3] (diag-4-4) circle (15pt);
                \filldraw [myred, opacity=0.3] (diag-2-6) circle (15pt);
                \filldraw [green, opacity=0.3] (diag-4-6) circle (15pt);
                \filldraw [green, opacity=0.3] (diag-3-9) circle (15pt);
                \filldraw [green, opacity=0.3] (diag-2-8) circle (15pt);
                \filldraw [green, opacity=0.3] (diag-4-10) circle (15pt);
                \filldraw [cyan, opacity=0.3] (diag-3-7) circle (15pt);
            }]
        {(0,3)}\drar\&\&
        {(4,7)}\drar\&\&
        {(8,11)}\drar\&\&
        {(12,15)}\drar\&\&
        {(1,16)}\drar\&\&
        {(2,5)}\drar\&\&
        {(6,9)}\drar\&\&
        {(10,13)}\drar\&\&
        {(14,17)}\drar\&\&
        {(0,3)}\&\&\\
        \&{(0,7)}\drar\urar\&\&
        {(4,11)}\drar\urar\&\&
        {(8,15)}\drar\urar\&\&
        {(1,12)}\drar\urar\&\&
        {(5,16)}\drar\urar\&\&
        {(2,9)}\drar\urar\&\&
        {(6,13)}\drar\urar\&\&
        {(10,17)}\drar\urar\&\&
        {(3,14)}\drar\urar\&\&\\
        {(7,14)}\drar\urar\&\&
        {(0,11)}\drar\urar\&\&
        {(4,15)}\drar\urar\&\&
        {(1,8)}\drar\urar\&\&
        {(5,12)}\drar\urar\&\&
        {(9,16)}\drar\urar\&\&
        {(2,13)}\drar\urar\&\&
        {(6,17)}\drar\urar\&\&
        {(3,10)}\drar\urar\&\&
        {(7,14)}\&\&\\
        \&{(11,14)}\urar\&\&
        {(0,15)}\urar\&\&
        {(1,4)}\urar\&\&
        {(5,8)}\urar\&\&
        {(9,12)}\urar\&\&
        {(13,16)}\urar\&\&
        {(2,17)}\urar\&\&
        {(3,6)}\urar\&\&
        {(7,10)}\urar\&\&
    \end{tikzcd}
    }
    \caption{An $\cA$-intermediate category of $C_{-3}(A_4)$. The red discs indicate $\cA$, the green discs indicate $\Sigma \cF$, and the cyan disc is not in either, but is contained in $\Sigma\cF \ast \cA$.}
    \label{arq:c3a4_intermediate1}
    \end{figure}
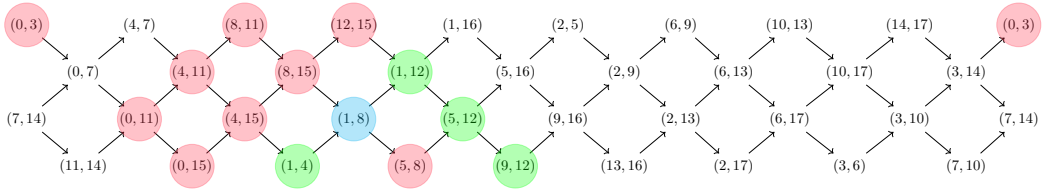

    \begin{figure}
    \begin{center}
    \begin{tikzpicture}[scale=.18, transform shape]
        \dFourTorsScope{(0,0)}{}{}{}{}{}{}{}{}{}

        \dFourTorsScope{(-15,-8)}{blue}{}{}{}{}{}{}{}{}
        \dFourTorsScope{(-5,-8)}{}{}{}{blue}{}{}{}{}{}
        \dFourTorsScope{(5,-8)}{}{}{}{}{}{}{}{blue}{}
        \dFourTorsScope{(15,-8)}{}{}{}{}{}{}{}{}{blue}

        \dFourTorsScope{(-25,-16)}{blue}{blue}{}{}{}{}{}{}{}
        \dFourTorsScope{(-15,-16)}{blue}{}{}{}{}{}{}{blue}{}
        \dFourTorsScope{(-5,-16)}{blue}{}{}{}{}{}{}{}{blue}
        \dFourTorsScope{(5,-16)}{}{}{}{blue}{}{blue}{}{}{}
        \dFourTorsScope{(15,-16)}{}{}{}{blue}{blue}{}{}{}{}
        \dFourTorsScope{(25,-16)}{}{}{}{}{}{}{}{blue}{blue}

        \dFourTorsScope{(30,-24)}{}{}{}{blue}{blue}{}{}{}{blue}
        \dFourTorsScope{(20,-24)}{}{}{}{blue}{}{blue}{}{blue}{}
        \dFourTorsScope{(10,-24)}{}{}{}{blue}{blue}{blue}{}{}{}
        \dFourTorsScope{(0,-24)}{blue}{}{}{}{}{}{}{blue}{blue}
        \dFourTorsScope{(-10,-24)}{blue}{blue}{}{}{}{}{}{blue}{}
        \dFourTorsScope{(-20,-24)}{blue}{blue}{blue}{}{}{}{}{}{}
        \dFourTorsScope{(-30,-24)}{blue}{blue}{}{blue}{}{}{}{}{}

        \dFourTorsScope{(-20,-32)}{blue}{blue}{blue}{blue}{}{}{}{}{}
        \dFourTorsScope{(-10,-32)}{blue}{blue}{blue}{}{}{}{}{blue}{}
        \dFourTorsScope{(0,-32)}{blue}{blue}{blue}{}{}{}{}{}{blue}
        \dFourTorsScope{(10,-32)}{blue}{blue}{}{blue}{}{blue}{}{}{}
        \dFourTorsScope{(20,-32)}{}{}{}{blue}{blue}{blue}{blue}{}{}

        \dFourTorsScope{(-25,-40)}{blue}{blue}{blue}{blue}{blue}{}{}{}{}
        \dFourTorsScope{(-15,-40)}{blue}{blue}{blue}{blue}{}{blue}{}{}{}
        \dFourTorsScope{(-5,-40)}{blue}{blue}{blue}{}{}{}{}{blue}{blue}
        \dFourTorsScope{(5,-40)}{blue}{blue}{}{blue}{}{blue}{}{blue}{}
        \dFourTorsScope{(15,-40)}{}{}{}{blue}{blue}{blue}{blue}{blue}{}
        \dFourTorsScope{(25,-40)}{}{}{}{blue}{blue}{blue}{blue}{}{blue}

        \dFourTorsScope{(-15,-48)}{blue}{blue}{blue}{blue}{blue}{blue}{}{}{}
        \dFourTorsScope{(-5,-48)}{blue}{blue}{blue}{blue}{blue}{}{}{}{blue}
        \dFourTorsScope{(5,-48)}{blue}{blue}{blue}{blue}{}{blue}{}{blue}{}
        \dFourTorsScope{(15,-48)}{}{}{}{blue}{blue}{blue}{blue}{blue}{blue}

        \dFourTorsScope{(0,-56)}{blue}{blue}{blue}{blue}{blue}{blue}{blue}{}{}

        \dFourTorsScope{(-5,-64)}{blue}{blue}{blue}{blue}{blue}{blue}{blue}{}{blue}
        \dFourTorsScope{(5,-64)}{blue}{blue}{blue}{blue}{blue}{blue}{blue}{blue}{}

        \dFourTorsScope{(0,-72)}{blue}{blue}{blue}{blue}{blue}{blue}{blue}{blue}{blue}
    \end{tikzpicture}
    \end{center}
    \caption{Torsion-free classes of $\cA$. Each small figure is in the shape of the AR-quiver for $\cA$ as described above, and the blue discs indicate the indecomposable objects of the torsion-free class.}
    \label{fig:d4_torf}
    \end{figure}
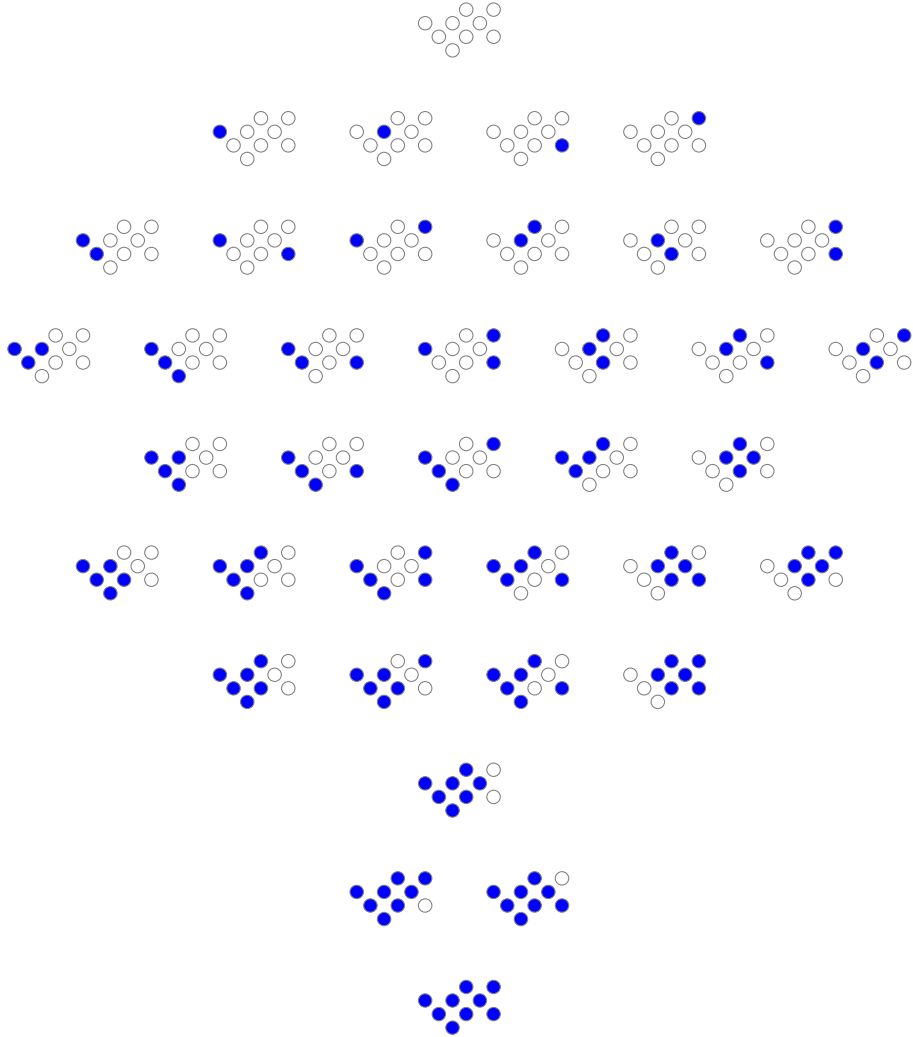

    \end{example}

    \begin{example}
        Consider the algebra $A\coloneqq kQ$, where

        \begin{equation*}
            Q:\quad
        \begin{tikzcd}
            1\rar&2 \rar&3.
        \end{tikzcd}
        \end{equation*}

        The AR-quiver of $D^b(A)$ is as follows

        \begin{equation*}
        \cdots\hspace{1em}
        \begin{tikzcd}[column sep = 1em, row sep = 1em]
            \qrepp{\Sigma^{-1}S(2)}\ar[dr]&&\qrepp{\Sigma^{-1}I(1)}\ar[dr]&&\qrepp{P(1)}\ar[dr]&&\qrepp{\Sigma P(3)}\ar[dr]&&\qrepp{\Sigma S(2)} \\
            &\qrepp{\Sigma^{-1}I(2)}\ar[dr]\ar[ur]&&\qrepp{P(2)}\ar[ur]\ar[rd]&&\qrepp{I(2)}\ar[dr]\ar[ur]&&\qrepp{\Sigma P(2)}\ar[dr]\ar[ur]\\
            \qrepp{\Sigma^{-1}P(1)}\ar[ur]&&\qrepp{P(3)}\ar[ur]&&\qrepp{S(2)}\ar[ur]&&\qrepp{I(1)}\ar[ur]&&\qrepp{\Sigma P(1)}\\
        \end{tikzcd}
        \hspace{1em}\cdots
        \end{equation*}
         
        Note that $\mod A$ sits inside $D^b(A)$ as a proper abelian subcategory 
        since $\mod A$ is the heart of the standard t-structure in $D^b(A)$,
        but one can also find other proper abelian subcategories in $D^b(A)$ which are not hearts
        of t-structures.

        \indent For example, $\cS = \set{P(3), S(2)}$ is a 2-orthogonal collection (see \cite[def. 1.2]{jorgensen2022abelian} for a definition).
        Thus the extension closure $\langle\cS\rangle = \add(P(3), S(2), P(2))$ is a proper abelian
        subcategory by \cite[thm. A]{jorgensen2022abelian}.
        Notice that $\langle\cS\rangle\cong \mod kA_2$.
        It is easy to check that $\langle\cS\rangle$ satisfies $E_2$.
        This means that the proper abelian subcategory $\langle\cS\rangle$ satisfies \Cref{setup}.
        Furthermore,
        \begin{equation*}
           \Sigma \langle\cS\rangle \ast \langle\cS\rangle
           = \Sigma \langle\cS\rangle \oplus \langle\cS\rangle 
           =  \langle\cS\rangle \ast \Sigma\langle\cS\rangle.
        \end{equation*}

        Since we know that $\langle\cS\rangle\cong \mod kA_2$ we can find torsion-free classes. 
        As an example we can choose $\cF = \add(P(3),P(2))$, which would give the corresponding $\langle\cS\rangle$-intermediate category $\pC = \Sigma \cF \ast \langle\cS\rangle = \add(\Sigma P(3), \Sigma P(2), P(3), P(2), S(2))$.
  \end{example}

    \textit{Acknowledgement.}
    Thanks to my supervisor Peter Jørgensen for all his guidance.

    \indent This project was supported by grant no. DNRF156 from the Danish National Research Foundation.

\end{document}